\documentclass{article}
\usepackage{graphicx}
\usepackage[english]{babel}
\usepackage{amssymb,amsmath, amsthm, amscd,ifthen}
\usepackage{xcolor}
\usepackage{dsfont}

\newtheorem{conj}{Conjecture}
\newtheorem{theorem}{Theorem}
\newtheorem{definition}{Definition}
\newtheorem{lemma}[theorem]{Lemma}
\newcommand{\m}{\mathcal}

\title{More on the Erd\H os--Kleitman problem on matchings in set families}
\author{Andrey Kupavskii\footnote{Moscow Institute of Physics and Technology, St. Petersburg State University, Innopolis University, Russia; Email: {\tt kupavskii@ya.ru}}, Georgy Sokolov\footnote{Moscow Institute of Physics and Technology, Innopolis University, Russia; Email: {\tt sokolov.gm@phystech.edu}}}
\date{}

\begin{document}

\maketitle

\begin{abstract}
Let $e(n,s)$ denote the maximum size of a family $\mathcal{F}$ of subsets of an $n$-element set that contains no $s$ pairwise disjoint members. In 1968, answering a question of Erd\H{o}s, Kleitman determined $e(sm-1,s)$ and $e(sm,s)$ for all integers $m,s\ge 1$. Half a century later, Frankl and Kupavskii determined $e(s(m+1)-\ell, s)$ for $\ell \leq \frac{s-3}{m+3}$. They showed that the corresponding extremal example is closely connected with the extremal example for the Erd\H{o}s Matching Conjecture, and conjectured that  the same remains true for all $\ell \leq s/2$. In this paper, we prove an approximate version of their conjecture for $s\ge s_0(m)$.
\end{abstract}

\section{Introduction}

Let $[n] = \{1, \ldots, n\}$. Any collection of sets is called a family. Given a set $X$, we denote by $2^{X}$, ${X \choose k}$, and ${X \choose \leq k}$ the families of all subsets of $X$, subsets of size exactly $k$, and subsets of size at most $k$, respectively. We refer to the family ${[n] \choose k}$ as the {\it $k$-th layer of the Boolean lattice}. A (full) {\it star} is the family of all sets in ${X \choose k}$ containing a fixed element.

A {\it matching} is a family of pairwise disjoint sets. Given a family $\mathcal{F}$, its {\it matching number} $\nu(\mathcal{F})$ is the maximum size of a matching in $\mathcal{F}$.
Let us introduce two classical extremal quantities concerning families with bounded matching number. Both were introduced by Erd\H{o}s in the 1960s:
\begin{align*}
    e(n,s)&=\max\big\{|\mathcal{F}|: \mathcal{F}\subset 2^{[n]}, \nu(\mathcal{F})<s\big\},\\
    e_k(n,s)&=\max\Big\{|\mathcal{F}|: \mathcal{F}\subset {[n]\choose k}, \nu(\mathcal{F})<s\Big\}.
\end{align*}

This paper is devoted to the study of the quantity $e(n, s)$. However, these two quantities are closely connected, so we briefly discuss $e_k(n, s)$ first.

Obviously, we have $e_k(n, s) = {n \choose k}$ for $n < sk$. The famous Erd\H{o}s--Ko--Rado theorem \cite{EKR} states that $e_k(n, 2) = {n - 1 \choose k - 1}$ for $n \geq 2k$. Soon after the EKR paper, Erd\H{o}s made a general conjecture concerning the value of $e_k(n, s)$ for all $n, k, s$. Let us define several families of $k$-element subsets of $[n]$:
\begin{equation}\label{eq: frankl families}
\mathcal{A}_i^{(k)}(n,s) := \Bigl\{F\in {[n]\choose k}:|F\cap [si-1]|\ge i\Bigr\}, \qquad 1\le i\le k.
\end{equation}

It is easy to check that these families do not contain an $s$-matching. Therefore,
\begin{equation} \label{eq: EMC examples}
e_k(n, s) \geq \max\left\{|\mathcal{A}_i^{(k)}(n,s)| : 1 \leq i \leq k\right\}.
\end{equation}
A careful analysis of the values $|\mathcal{A}_i^{(k)}(n,s)|$ shows that the maximum in \eqref{eq: EMC examples} is always attained for $i = 1$ or $i = k$. The Erd\H{o}s Matching Conjecture states that these examples are best possible.

\begin{conj}[Erd\H{o}s Matching Conjecture \cite{E}]\label{conj: EMC}
For $n\ge sk$,
\begin{equation}\label{eq: EMC statement}
e_k(n,s) = \max\left\{|\mathcal{A}_1^{(k)}(n,s)|,|\mathcal{A}_k^{(k)}(n,s)|\right\}.
\end{equation}
\end{conj}

Despite significant effort, the Erd\H{o}s Matching Conjecture remains open in general. It has been proved for $k\le 3$ \cite{EG, F11}. The maximum in \eqref{eq: EMC statement} is attained at the first term for $n$ sufficiently large compared with $sk$, and at the second term for $n$ close to $sk$. It is known that for $n$ sufficiently large we indeed have $e_k(n, s) = |\mathcal{A}_1^{(k)}(n,s)|$, while for $n$ sufficiently close to $sk$ we have $e_k(n, s) = |\mathcal{A}_k^{(k)}(n,s)|$. The best known bounds are due to \cite{F4} and \cite{FK21}. In the latter paper, the authors proved the conjecture for $n>\frac 53sk$ and $s\ge s_0$.

\begin{theorem}[Frankl and Kupavskii~\cite{FK21}] \label{t: Frankl-Kupavskii EMC}
    There exists an absolute constant $s_0$ such that
    \begin{equation}\label{eq: EMC big n}
        e_k(n,s) = |\mathcal{A}_1^{(k)}(n,s)| = {n \choose k} - {n - s + 1 \choose k},
    \end{equation}
    provided
    \[
    n \geq \frac{5}{3}(s-1)k - \frac{2}{3}(s-1)
    \qquad\text{and}\qquad
    s \geq s_0.
    \]
\end{theorem}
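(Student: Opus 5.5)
This statement is the main result of \cite{FK21}, and the present paper only cites it. If I were to reprove it, I would follow the concentration-plus-stability strategy of that paper. Argue by contradiction: let $\mathcal{F}\subset{[n]\choose k}$ have $\nu(\mathcal{F})<s$ and $|\mathcal{F}|>{n\choose k}-{n-s+1\choose k}$.

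\emph{Reductions.} First apply the standard shifting operations $S_{ij}$. These preserve $|\mathcal{F}|$ and do not increase $\nu(\mathcal{F})$, so we may assume $\mathcal{F}$ is shifted. Next, by the Frankl--Kupavskii reduction, an extremal family may be taken to be maximal; we then compare $\mathcal{F}$ with $\mathcal{A}_1^{(k)}(n,s)$. For each $T\subset[s-1]$ let
\[
\mathcal{F}(\bar T)=\{F\setminus[s-1]:F\cap[s-1]=T\}
\]
be the corresponding link family on $[s,n]$. The sets of $\mathcal{F}$ missing $[s-1]$ form the family $\mathcal{F}(\bar\emptyset)\subset{[s,n]\choose k}$. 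The goal becomes
\[
|\mathcal{F}(\bar\emptyset)|\le \sum_{\emptyset\ne T}\Bigl({n-s+1\choose k-|T|}-|\mathcal{F}(\bar T)|\Bigr),
\]
that is, every set of $\mathcal{F}$ avoiding $[s-1]$ must be paid for by sets of $\mathcal{A}_1$ that are missing from $\mathcal{F}$.

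\emph{Key step: a matching-based averaging.} Consider a uniformly random partition of a random $sk$-subset of $[n]$ into $s$ blocks of size $k$, or more flexibly the concentration inequality of \cite{FK21}. Since $\mathcal{F}$ has no $s$-matching, among the $s$ sets of any such random matching at most $s-1$ lie in $\mathcal{F}$. The heart of the argument is a weighted version of this: by shiftedness, for a set $F\in\mathcal{F}$ disjoint from $[s-1]$ one can build many $(s-1)$-matchings in $\mathcal{A}_1$, each containing one element of $[s-1]$ per edge, which together with $F$ form an $s$-matching. Hence each such matching must contain a non-member of $\mathcal{F}$. The Frankl--Kupavskii concentration tool is anti-concentration for the intersection of a random $k$-set with a fixed set, i.e.\ a bound on $\Pr\bigl(|F\cap[s-1]|\le 1\bigr)$. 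With it I would double count incidences between $\mathcal{F}(\bar\emptyset)$ and missing sets of $\mathcal{A}_1$, and show that the incidence ratio is favourable exactly when $n\ge\frac53(s-1)k-\frac23(s-1)$.

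\emph{Main obstacle.} The hard part is the counting step in the range $n$ close to $\frac53 sk$, where the slack is tiny. Crude averaging only gives something like $n\ge 2sk$. To reach $\frac53$ I would split $\mathcal{F}(\bar\emptyset)$ according to the structure of its shadow and treat the missing sets of $\mathcal{A}_1$ with $|T|=1$ and $|T|\ge 2$ separately. For large $s$, a Chernoff/hypergeometric concentration shows that the random matchings behave typically, and errors of order $O(1/s)$ are absorbed; this is where the assumption $s\ge s_0$ enters. The precise constants $\frac53$ and $-\frac23$ should come out of optimizing the comparison between the two cases $|T|=1$ and $|T|=2$.
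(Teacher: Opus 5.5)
You should know at the outset that the paper gives no proof of this statement. It is quoted from \cite{FK21} and used only as a black box, to secure condition \eqref{eq: condition t 3} in the proof of Lemma~\ref{l: matchings on layer m}. Your proposal therefore has to stand as a self-contained proof, and it has a genuine gap exactly where the content of the theorem lies.

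What you actually carry out is correct but routine:
the reduction to shifted families; the reformulation $|\mathcal{F}(\bar\emptyset)|\le\sum_{T\neq\emptyset}\bigl({n-s+1\choose k-|T|}-|\mathcal{F}(\bar T)|\bigr)$; and the observation that any $F\in\mathcal{F}(\bar\emptyset)$, together with an $(s-1)$-matching $G_1,\dots,G_{s-1}$ disjoint from $F$ with $G_j\cap[s-1]=\{j\}$, forces some $G_j\notin\mathcal{F}$. The decisive inequality is then only announced (``I would double count \dots\ and show that the incidence ratio is favourable exactly when \dots'', ``the constants should come out of optimizing \dots''). The tool you rely on, ``the concentration inequality of \cite{FK21}'', is part of the very proof you are supposed to be giving, so invoking it without statement or proof is circular.

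Moreover, the one concrete mechanism you describe cannot close the gap by itself. Take $F$ uniform in ${[s,n]\choose k}$ and $G_1,\dots,G_{s-1}$ a uniformly random matching of the above type disjoint from $F$. Then each $G_j\setminus\{j\}$ is uniform in ${[s,n]\choose k-1}$, and $\Pr[F\in\mathcal{F}]\le\sum_j\Pr[G_j\notin\mathcal{F}]$ gives only
\[
|\mathcal{F}(\bar\emptyset)|\le\frac{{n-s+1\choose k}}{{n-s+1\choose k-1}}\sum_{j=1}^{s-1}|\mathcal{M}_j|=\frac{n-s-k+2}{k}\sum_{j=1}^{s-1}|\mathcal{M}_j|,
\]
where $\mathcal{M}_j$ is the family of $k$-sets $G\notin\mathcal{F}$ with $G\cap[s-1]=\{j\}$. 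For $n\approx\frac53 sk$ the factor $\frac{n-s-k+2}{k}$ is of order $s$, while you need it to be $1$.

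The loss comes from discarding all correlation between the events $\{F\in\mathcal{F}\}$ and $\{G_j\notin\mathcal{F}\}$. A missing set is credited in every matching containing it, including those where $F\notin\mathcal{F}$ and no payment is needed. Splitting according to $|T|=1$ versus $|T|\ge 2$, or according to shadows, does not touch this loss, since it already occurs in the $|T|=1$ comparison. Overcoming it needs a genuinely new ingredient; this is where the real work of \cite{FK21} lies. Nothing in your sketch supplies it, or explains where the threshold $\frac53(s-1)k-\frac23(s-1)$ would come from.

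Also, crude averaging does not give anything like $n\ge 2sk$. Averaging over random $s$-matchings yields only bounds of the type of Lemma~\ref{l.frankl_shifting}, which in this range exceed $|\mathcal{A}_1^{(k)}(n,s)|$ by a constant factor. Even Frankl's earlier range $n\ge(2s-1)k-s+1$ required a genuinely additional argument.
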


The conjecture is also known to hold for $n$ very close to $sk$ and $s\ge s_0(k)$, see \cite{F7, KoKu}.

For $n \geq 2ks(1+o(1))$, Frankl and Kupavskii proved a Hilton--Milner-type stability result \cite{FK6}. We will need it in the proof of our main result, and we state it precisely in Section~\ref{sec3}. Informally, it says that in this regime of parameters, the largest nontrivial family with $\nu(\mathcal{F})\le s-1$ is a union of $s-2$ stars and a Hilton--Milner family.

We now turn to the quantity $e(n, s)$. It turns out that for $s \geq 3$ the behaviour of $e(n, s)$ depends on the residue of $n$ modulo $s$. In what follows, we use the parametrization
\[
n = sm + s - \ell,
\qquad 1 \le \ell \le s.
\]
In 1968, answering a question of Erd\H{o}s, Kleitman determined $e(sm+s-\ell,s)$ for $\ell \in \{1, s\}$.

\begin{theorem}[Kleitman~\cite{Kl}]  \label{t: Kleitman}
\begin{align}
\label{eq: EK l=1} e(sm-1,s) &= \sum_{t=m+1}^{sm+s-1}{sm+s-1\choose t},\\
\label{eq: EK l=0} e(sm,s) &= {sm-1\choose m}+\sum_{t=m+1}^{sm}{sm\choose t}.
\end{align}
\end{theorem}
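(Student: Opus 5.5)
The two identities of Theorem~\ref{t: Kleitman} are best read with $n=s(m+1)-1$ in \eqref{eq: EK l=1} (the case $\ell=1$ of the parametrization) and $n=sm$ in \eqref{eq: EK l=0}. The plan is to prove matching lower and upper bounds in each case.

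\textbf{Lower bounds.} For $n=s(m+1)-1$ we take all sets of size at least $m+1$. Any $s$ of them cover at least $s(m+1)>n$ elements, so they cannot be pairwise disjoint. For $n=sm$ we take all sets of size at least $m+1$, together with the $m$-sets avoiding the element $n$. Suppose $s$ members were pairwise disjoint. Since $s(m+1)>n$, at most $s-1$ of them can have size $\ge m+1$, so each has size exactly $m$ and together they partition $[n]$. Then one of them contains $n$, a contradiction. Counting these families gives exactly the right-hand sides.

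\textbf{Reduction for the upper bound.} First we may assume $\mathcal F$ is an up-set. Indeed, if $s$ pairwise disjoint sets lay in the up-closure of $\mathcal F$, then the members of $\mathcal F$ below them would also be pairwise disjoint. Write $f_t=|\mathcal F\cap{[n]\choose t}|/{n\choose t}$. Because $\mathcal F$ is an up-set, the normalized matching property of the Boolean lattice (via LYM / Kruskal--Katona) makes $f_t$ nondecreasing in $t$.

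\textbf{Main constraints.} Fix sizes $a_1+\dots+a_s=n$ and split a uniformly random permutation of $[n]$ into consecutive blocks of these sizes. At most $s-1$ blocks lie in $\mathcal F$, and taking expectations gives
\[
\sum_{i=1}^s f_{a_i}\le s-1.
\]
For $t\le m$ we have $n-t\ge (s-1)(m+1)$ when $n=s(m+1)-1$. So we may choose $a_1=t$ and all other $a_i\ge m+1$, which yields
\[
f_t\le\sum_{i\ge 2}(1-f_{a_i}).
\]
That is, any mass of $\mathcal F$ on a low layer forces a deficiency on the high layers. For $n=sm$ the same holds for $t\le m-1$. For $t=m$, the partition into $s$ blocks of size $m$ gives $f_m\le (s-1)/s$, and hence $|\mathcal F_m|\le{sm-1\choose m}$. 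The excess over this value must again be charged to higher layers.

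\textbf{The weighting (main obstacle).} The upper bound now reduces to a linear program: maximize $\sum_t{n\choose t}f_t$ subject to monotonicity, $0\le f_t\le 1$, and the partition inequalities above. We must show its optimum is attained by the extremal profile. The crux is to choose, for each low layer $t$, a probability distribution over admissible size vectors $(a_2,\dots,a_s)$ such that the total deficiency ${n\choose t}f_t$ charged to each high layer $j$ is at most ${n\choose j}(1-f_j)$ in aggregate. This requires exhibiting a feasible dual solution, and the binomial ratios make it delicate. I would first try Kleitman's original device: induct on $s$ by deleting one block and applying the statement to the remaining ground set, so that the charging is done one layer at a time. If that fails, I would try a direct dual certificate using size vectors with all but one block of size $m+1$.
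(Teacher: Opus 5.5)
The paper gives no proof of this statement; it only cites Kleitman (1968). Judged on its own, your proposal has a genuine gap exactly where the content of the theorem lies.

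Your lower bounds, the passage to up-sets, the monotonicity of $f_t$, and the partition inequality $\sum_i f_{a_i}\le s-1$ are all fine. You then reduce the upper bound to the claim that the linear program (maximize $\sum_t\binom nt f_t$ over nondecreasing $f\in[0,1]^{n+1}$ satisfying all partition inequalities) is optimized by the extremal profile. That claim is false.

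Take $s=3$, $n=8$ (your $m=2$) and $(f_0,\dots,f_8)=(0,\tfrac14,\tfrac12,\tfrac34,1,1,1,1,1)$. This is nondecreasing. Each of the ten triples with $a_1+a_2+a_3=8$ gives $f_{a_1}+f_{a_2}+f_{a_3}\le 2$, with equality for $(0,4,4)$, $(1,3,4)$, $(2,2,4)$, $(2,3,3)$. Yet
\[
\sum_{t}\binom 8t f_t=2+14+42+163=221>219=\sum_{t\ge 3}\binom 8t .
\]
So no dual certificate exists, including none built from size vectors with all but one block of size $m+1$. The obstacle is not delicate binomial ratios; the relaxation itself is too weak.

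The reason is that each partition inequality is an average over a single random partition. It forgets how members of $\mathcal F$ sit relative to one another.
\begin{itemize}
\item In an actual up-set with $f_1=\tfrac14$, $\mathcal F$ contains two disjoint singletons. This already forbids every member disjoint from their union, but the LP only records $2f_1+f_6\le 2$.
\item Likewise, $14$ pairs on $8$ points always contain two disjoint ones. Together with the complementary $4$-set they form a $3$-matching, but the LP only records $2f_2+f_4\le 2$.
\end{itemize}
A correct proof must therefore use joint structure. One option is a combinatorial inequality valid for every fixed ordering, not just on average. Another is a genuine restriction of $\mathcal F$ to the complement of some of its members, where the matching number drops, as in the proof of Lemma~\ref{l: parametrised matchings}. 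Your suggestion to \emph{induct on $s$ by deleting a block} points in this direction but is not developed.

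There are also two smaller slips.
\begin{itemize}
\item In the $n=sm$ lower bound, \emph{at most $s-1$ members have size $\ge m+1$} does not imply that all have size $m$. The correct reason is that all have size $\ge m$ and their total size is $\le sm$.
\item For $n=sm$ and $t\le m-1$, you cannot in general take all other blocks of size $\ge m+1$, since this requires $t\le m+1-s$. Part of the charge must therefore go to layer $m$ itself.
\end{itemize}
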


The family achieving equality in \eqref{eq: EK l=1} is simply ${[n] \choose \geq m + 1}$. It is not difficult to check that $e(sm, s) = 2e(sm-1, s)$. A family achieving equality in \eqref{eq: EK l=0} may be obtained from ${[n] \choose \geq m}$ by the {\it doubling operation}. Given a family $\mathcal{F} \subset 2^{[n]}$, its {\it doubling} is the family $\mathcal{F}' \subset 2^{[n+1]}$ defined by
\[
\mathcal{F}' = \{F \subset [n + 1]: F \cap [n] \in \mathcal{F}\}.
\]
It is easy to check that $|\mathcal{F}'| = 2|\mathcal{F}|$ and that $\nu(\mathcal{F}) < s$ implies $\nu(\mathcal{F}') < s$.

Note that the extremal examples in both cases of Theorem~\ref{t: Kleitman} may be described in the following way: the extremal family consists of all sets of size at least $m+1$, together with a largest family of $m$-element sets that does not contain an $\ell$-matching. In the case $\ell=1$ the latter family is empty, while in the case $\ell=s$ it is $\mathcal{A}_m^{(m)}(n,s)$. In \cite{FK9}, Frankl and Kupavskii introduced the following family.

\begin{definition}\label{def3}
Let $n = sm+s-\ell$, with $0<\ell\le s$. Set
\[
\mathcal{P}(m,s,\ell) := \bigl\{P\subset [n]: |P|+|P\cap [\ell-1]|\ge m+1\bigr\}.
\]
\end{definition}

This family consists of all sets of size at least $m+1$, the family $\mathcal{A}_1^{(m)}(n, \ell)$, and some sets of size less than $m$. It is easy to check that $\nu(\mathcal{P}(m,s,\ell)) < s$. In \cite{FK9}, Frankl and Kupavskii made the following conjecture.

\begin{conj} \label{conj: EK}
    Suppose that $s\ge 2$, $m\ge 1$, and $n = sm+s-\ell$ for some integer $0<\ell\le \lceil \frac s2\rceil$. Then
    \begin{equation}\label{eq007}
        e(sm+s-\ell,s) = |\mathcal{P}(m,s,\ell)|.
    \end{equation}
\end{conj}

The condition $\ell\le \lceil \frac s2\rceil$ is needed to ensure that $\mathcal{A}_1^{(m)}(n, \ell)$ is the largest family of $m$-element sets avoiding an $\ell$-matching.
In \cite{FK9} and \cite{FK8}, Conjecture~\ref{conj: EK} was proved in several cases.

\begin{theorem} \label{t: Frankl-Kupavskii EK}
The equality $e(sm+s-\ell,s) = |\mathcal{P}(m,s,\ell)|$ holds in each of the following cases:
\begin{align*}
    &\mathrm{(i)}\qquad \ell = 2, \\
    &\mathrm{(ii)}\qquad m=1,\\
    &\mathrm{(iii)}\qquad s\ge \ell m+3\ell+3.
\end{align*}
\end{theorem}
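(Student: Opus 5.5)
The plan is to treat the three cases through one framework, Kleitman's averaging over partitions, and to add a separate stability argument in case (iii). First we reduce to shifted families. The shifting operation $S_{ij}$ preserves $|\mathcal F|$ and does not increase $\nu(\mathcal F)$, so we may assume $\mathcal F$ is shifted and, in particular, an up-set. Write $\mathcal F_k=\mathcal F\cap{[n]\choose k}$ and $n=s(m+1)-\ell$.

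The basic tool is a cross-matching averaging inequality. Suppose $k_1,\dots,k_s$ satisfy $\sum k_i\le n$. Take a uniformly random ordered partition of $[n]$ into blocks $B_1,\dots,B_s$ of sizes $k_1,\dots,k_s$ plus a remainder. Since not all $B_i$ can lie in $\mathcal F_{k_i}$, we get
\[
\sum_{i=1}^s \frac{|\mathcal F_{k_i}|}{{n\choose k_i}}\le s-1 .
\]
Applying this with $s-\ell$ blocks of size $m+1$ and $\ell$ blocks of size $m$ (which fills $[n]$ exactly) relates layers $m$ and $m+1$. Using unbalanced block sizes $(m+1+j,\,m-j)$ and similar choices pairs layer $m+1+j$ with layer $m-j$. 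This is exactly the pairing realised by $\mathcal P(m,s,\ell)$, where $P\in\mathcal P$ iff $|P|+|P\cap[\ell-1]|\ge m+1$.

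Summing these inequalities with suitable weights reduces the problem to the following statement: for every $j\ge 0$, the combined contribution of $\mathcal F_{m-j}$ and $\mathcal F_{m+1+j}$ is at most that of $\mathcal P$. We then compare layer by layer.

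\textbf{Case (ii), $m=1$.} Only the empty set, singletons and pairs interact. Here $n=2s-\ell$, and the singletons in $\mathcal F$ together with a maximum matching of pairs must have total size below $s$. A direct count finishes this case: if $\mathcal F$ contains $t$ singletons, it must miss enough pairs in the complement of those singletons. The count is maximised at $t=\ell-1$.

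\textbf{Case (i), $\ell=2$.} Layer $m$ must avoid a $2$-matching together with $s-2$ disjoint $(m+1)$-sets. We run the above averaging, but with one block of size $m$ replaced by a random $m$-set conditioned to intersect a fixed element. This is Kleitman's argument for $e(sm,s)$ with one coordinate frozen. Equality analysis then forces $\mathcal F_m$ to be a star, and by shiftedness the star is centred at $1=[\ell-1]$.

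\textbf{Case (iii), $s\ge \ell m+3\ell+3$.} This is the main obstacle. If $\nu(\mathcal F_m)<\ell$, then $n=sm+s-\ell$ is far larger than $2\ell m$. The known bound for the Erd\H os Matching Conjecture in this range (Frankl's $n\ge(2\ell-1)m$ bound) then gives $|\mathcal F_m|\le|\mathcal A^{(m)}_1(n,\ell)|$. Lower layers are handled by the same comparison with $[\ell-1]$, using shiftedness.

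The hard part is when $\mathcal F_m$ contains an $\ell$-matching, or more generally when $\mathcal F_m$ is large but not of star type. In that situation an $\ell$-matching $M_1,\dots,M_\ell$ in $\mathcal F_m$ forces many $(m+1)$-sets on the complement $[n]\setminus\bigcup M_i$ to be missing. That complement has exactly $(s-\ell)(m+1)$ elements, so $\mathcal F_{m+1}$ restricted to it has no perfect matching. I would quantify this loss by averaging over $\ell$-matchings in $\mathcal F_m$, and show it exceeds the possible gain $|\mathcal F_m|-|\mathcal A_1^{(m)}(n,\ell)|$. This comparison is exactly where the hypothesis $s\ge \ell m+3\ell+3$ enters: the missing $(m+1)$-sets number roughly ${n\choose m+1}/s$ per matching, against a gain of order $\ell{n\choose m-1}$. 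I would try this crude counting first. If it is insufficient, I would invoke the Hilton--Milner-type stability of Frankl and Kupavskii \cite{FK6} to bound non-star families $\mathcal F_m$ by a deficit of order ${n-\ell m\choose m-1}$.
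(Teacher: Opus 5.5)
The paper does not prove this theorem. It quotes it from earlier work of Frankl and Kupavskii, but its proof of Theorem~\ref{t: strong main} refines the route behind (iii), and measured against that route your proposal has genuine gaps.

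\textbf{The averaging framework.} With $x_k=|\mathcal F_k|/\binom{n}{k}$, your inequality for $s-\ell$ blocks of size $m+1$ and $\ell$ blocks of size $m$ reads $(s-\ell)x_{m+1}+\ell x_m\le s-1$. With $\mathcal F_{m+1}$ full this only gives $|\mathcal F_m|\le\frac{\ell-1}{\ell}\binom{n}{m}$. That is a factor $n/(\ell m)>s/\ell$ above $|\mathcal P^{(m)}|\le(\ell-1)\binom{n-1}{m-1}$.

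An exact bound from a weighted sum of such inequalities would need each of them to be tight at $\mathcal P$, and those involving layer $m$ are not. With positive probability $[\ell-1]$ lands entirely in the blocks of size at least $m+1$ and the leftover. Then every block of size at most $m$ misses $\mathcal P$, so at most $s-2$ blocks lie in $\mathcal P$ (an $m$-block plus $s-1$ blocks of size at least $m+1$ would need $n+\ell-1$ points). Moreover, $\mathcal P$ contains every set of size at least $m+1$, so it realises no pairing of layers $m+1+j$ and $m-j$.

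Consequences: your reduction to a layer-pair statement is unsupported, and the ``equality analysis'' in (i) has no tight inequality behind it. So (i) and (ii), each a separate substantial result, are asserted rather than proved.

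\textbf{The trade-off in (iii).} One $\ell$-matching in $\mathcal F_m$ certifies only $\binom{(s-\ell)(m+1)-1}{m}$ missing $(m+1)$-sets, via Lemma~\ref{l.frankl_shifting modified} on its complement. The figure $\binom{n}{m+1}/s$ is not available, since $n<s(m+1)$. Meanwhile the gain $|\mathcal F_m|-|\mathcal P^{(m)}|$ is not $O(\ell\binom{n}{m-1})$ once $\nu(\mathcal F_m)\ge\ell$. Nothing in your argument excludes $\mathcal F_m=\mathcal A_1^{(m)}(n,s)$. Its excess $\binom{n-\ell+1}{m}-\binom{n-s+1}{m}$ is of order $\binom{n}{m}$, and for large $m$ it exceeds the certified loss even at $s=\ell m+3\ell+3$ (e.g.\ $m=10$, $\ell=2$, $s=29$).

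Averaging over many $\ell$-matchings does not obviously help, since they may certify the same missing sets. Your own estimates compare $\Theta(\binom{n}{m})$ with $\Theta(\frac{\ell}{s}\binom{n}{m})$ and would succeed for every $s>\ell$, which signals that the bookkeeping is off.

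The missing idea is to make both sides depend on the matching number, as in Lemma~\ref{l: parametrised matchings}:
\begin{itemize}
\item If $\nu(\mathcal F^{(\le m)})<\ell+t$, known cases of the Erd\H{o}s Matching Conjecture cap the layer-$m$ gain at $\binom{n-\ell+1}{m}-\binom{n-\ell-t}{m}\le(t+1)\binom{n-\ell}{m-1}$.
\item If $\nu(\mathcal F^{(\le m)})\ge\ell+t$, an $(\ell+t)$-matching leaves at least $(s-\ell-t)(m+1)+t$ points. It forces $(1+\frac{t}{m+1})\binom{n-(\ell+t)m-1}{m}$ missing $(m+1)$-sets, more than all lower layers can supply for suitable $t$.
\end{itemize}
The paper tunes this for $s\ge s_0(m)$. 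The explicit threshold in (iii) comes from the original argument, which the paper says controls the first $m+2$ layers.

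\textbf{The lower layers.} Run with mixed-size matchings, the same argument shows that no $k$-matching with $k\le\ell$ has total size at most $km+k-\ell$. This is what bounds the lower layers, via $\nu(\mathcal F^{(i)})<\ell/(m-i+1)$.

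Even $\nu(\mathcal F_m)<\ell$ together with the matching-conjecture bound does not finish, since the lower layers may exceed those of $\mathcal P$. You need two further ingredients:
\begin{itemize}
\item When $\mathcal F_m\not\subset\mathcal P^{(m)}$, the Hilton--Milner deficit $\binom{n-m-\ell+1}{m-1}-1$. This is where Theorem~\ref{t: Hilton-Milner for EMC} belongs, not in the $\ell$-matching case.
\item When $\mathcal F_m\subset\mathcal P^{(m)}$, the cross-dependence argument of Lemma~\ref{l: lower layers inclusion}, showing that an $i$-set outside $\mathcal P$ costs $\binom{n-m-\ell+1}{m-1}$ sets of $\mathcal P^{(m)}$.
\end{itemize}
Both deficits beat the lower layers by Lemma~\ref{l: HM calculation}.
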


Recently, Kupavskii and Sokolov determined $e(2s + s - \ell, s)$ for all $s$ and $\ell$ \cite{KS}. In particular, Conjecture~\ref{conj: EK} was confirmed for $m=2$. We refer to that paper for a more thorough and up-to-date discussion of the problem.

In this paper, we prove an approximate version of this conjecture for $s$ sufficiently large compared to $m$.

\begin{theorem} \label{t: main}
    Let $n = sm + s - \ell$. For every integer $m$ and every $\varepsilon > 0$ there exists $s_0$ such that
    \begin{equation}\label{eq: main theorem}
        e(n,s) = |\mathcal{P}(m,s,\ell)|,
    \end{equation}
    provided $s > s_0$ and $\ell \leq \left(\frac{1}{2} - \varepsilon\right)s$.
\end{theorem}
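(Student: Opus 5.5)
We will follow the Frankl--Kupavskii strategy: reduce to shifted families, compare $\mathcal F$ layer by layer with $\mathcal{P}(m,s,\ell)$, and use the Erd\H{o}s Matching Conjecture in the stable regime to control the critical layer. First we apply the standard down-shifts (and up-closure, since adding supersets cannot create an $s$-matching). This lets us assume that $\mathcal F$ is shifted and an up-set with $\nu(\mathcal F)<s$. Write $\mathcal F^{(k)}=\mathcal F\cap{[n]\choose k}$.

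The key structural tool is Kleitman-type averaging over random ordered partitions of $[n]$ into $s$ blocks. Since $n=s(m+1)-\ell$, a uniformly random partition into $s$ parts has $\ell$ blocks of size $m$ and $s-\ell$ blocks of size $m+1$; we may also allow a few blocks to absorb extra elements. At most $s-1$ blocks lie in $\mathcal F$. This yields a weighted inequality bounding the densities $|\mathcal F^{(k)}|/{n\choose k}$ for $k=m$ and $k=m+1$ against one another, and more generally bounding deficits in higher layers by excess in lower layers. We need this inequality in an \emph{exchange} form: every $m$-set beyond what $\mathcal P$ allows, and every set of size $<m$, must be paid for by missing sets of size $\ge m+1$, at a rate that loses. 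The bookkeeping should reduce to a comparison on layers $m-1$, $m$, $m+1$, because for $s\ge s_0(m)$ the layers of size $\le m-2$ are negligible relative to the gains. Concretely, the inequality for $\mathcal F^{(m)}$ says that if $\mathcal F^{(m)}$ contains an $\ell$-matching together with enough other structure, then many $(m+1)$-sets must be missing, one for each way of completing the matching.

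Next we analyse the $m$-th layer. Let $\mathcal G=\mathcal F^{(m)}$. If $\nu(\mathcal G)<\ell$, then $|\mathcal G|\le e_m(n,\ell)$. Because $\ell\le(1/2-\varepsilon)s$, we have $n\approx s(m+1)\ge(2+2\varepsilon)\ell m$, which is eventually beyond $\frac53\ell m$ once $m$ is fixed. Hence Theorem~\ref{t: Frankl-Kupavskii EMC} (when $\ell\ge s_0$) gives $|\mathcal G|\le|\mathcal A_1^{(m)}(n,\ell)|$; for bounded $\ell$ we instead invoke Theorem~\ref{t: Frankl-Kupavskii EK}(iii). What remains is to show that the lower layers contribute at most what $\mathcal P$ gets, namely sets $P$ with $|P|+|P\cap[\ell-1]|\ge m+1$. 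For this we use a similar matching argument restricted to the structure determined by $[\ell-1]$.

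If instead $\nu(\mathcal G)\ge\ell$, we use the Hilton--Milner-type stability of \cite{FK6}, which is valid since $n\ge 2m\ell(1+\varepsilon)$. It says that either $\mathcal G$ lies inside the union of $\ell-1$ stars, or $|\mathcal G|$ falls short of $|\mathcal A_1^{(m)}(n,\ell)|$ by a Hilton--Milner-size gap of order ${n\choose m-1}$. Combined with the averaging inequality, every extra $\ell$-matching in the $m$-th layer forces $\Omega\bigl(\varepsilon{n\choose m+1}/s\bigr)$-type losses in layer $m+1$. These losses dominate the possible gain, which is at most of order ${n\choose m-1}\cdot s$, because $s\ge s_0(m)$ and the ratio ${n\choose m+1}/{n\choose m}\sim n/m$.

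The main obstacle will be this exchange step in the regime where $\ell$ is close to $(1/2-\varepsilon)s$. There the margin between $|\mathcal A_1|$ and $|\mathcal A_m|$, and between the counts of $m$- and $(m+1)$-sets in the partition averaging, is only linear in $\varepsilon$. We will first try to make the averaging quantitative through the shifted structure: take an $\ell$-matching in $\mathcal G$ that is disjoint from $[\ell-1]$ in a controlled way, complete it to a near-partition, and count the missing $(m+1)$-sets via shiftedness. If that fails, we fall back on a stability version of the argument with an explicit $\varepsilon$-dependent error term absorbed by $s_0$.
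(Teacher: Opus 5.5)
Your outline has the right ingredients: shifting, the EMC bound on layer $m$, Hilton--Milner stability, and missing $(m+1)$-sets forced by a matching of $m$-sets. But the decisive exchange step is not proved, and the argument you sketch for it fails.

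\textbf{The case $\nu(\mathcal G)\ge\ell$.} Theorem~\ref{t: Hilton-Milner for EMC} applies only to families with \emph{no} $\ell$-matching, so it says nothing in this case. In the paper it is used only once $\nu(\mathcal F^{(m)})<\ell$ is known, to force $\mathcal F^{(m)}\subset\mathcal P(m,s,\ell)^{(m)}$. It therefore belongs in your first case, where $|\mathcal G|\le|\mathcal A_1^{(m)}(n,\ell)|$ alone is not enough, because lower layers could compensate.

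Nor is the loss/gain comparison won by taking $s$ large. An $\ell$-matching in $\mathcal F^{(m)}$ forces, by Lemma~\ref{l.frankl_shifting modified} on its complement of size $(m+1)(s-\ell)$, about ${(m+1)(s-\ell)-1\choose m}$ missing $(m+1)$-sets. The only a priori bound on the layer-$m$ gain coming from $\nu(\mathcal F)<s$ is at least ${n-\ell+1\choose m}-{n-s+1\choose m}$ (consider $\mathcal A_1^{(m)}(n,s)$). Both are $\Theta(s^m)$; for $m=3$ and $\ell\approx s/2$ they are about $0.19{n\choose 3}$ and $0.27{n\choose 3}$, so the loss is smaller. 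Your own quantities $\varepsilon{n\choose m+1}/s$ and $s{n\choose m-1}$ likewise have ratio of order $\varepsilon$, independent of $s$. Losses forced by different $\ell$-matchings overlap, so they cannot simply be summed. Kleitman's partition averaging does not rescue this either: $\mathcal P(m,s,\ell)$ is far from tight for it, since with layer $m+1$ full it still permits $|\mathcal F^{(m)}|$ up to $\frac{\ell-1}{\ell}{n\choose m}$.

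\textbf{The missing idea} is the paper's dichotomy with a constant $t=t(m)$.
\begin{itemize}
\item If $\nu(\mathcal F^{(\le m)})<\ell+t$, the EMC value $e_m(n,\ell+t)={n\choose m}-{n-\ell-t+1\choose m}$ (Theorem~\ref{t: Frankl-Kupavskii EMC}, valid since $n\ge\frac53 m(\ell+t)$) limits the layer-$m$ gain to $O(t{n\choose m-1})=O(s^{m-1})$. Lemma~\ref{l.frankl_shifting} limits the lower layers to $(\ell+t-1)\sum_{i<m}{n-1\choose i-1}=O(s^{m-1})$. So the $\Theta(s^m)$ loss wins.
\item If $\nu(\mathcal F^{(\le m)})\ge\ell+t$, deleting an $(\ell+t)$-matching leaves $t$ spare elements. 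Lemma~\ref{l.frankl_shifting modified} then gives $(1+\frac{t}{m+1}){n-(\ell+t)m-1\choose m}$ missing $(m+1)$-sets. For $t$ large this beats even the trivial bound $(s-1)\sum_{i\le m}{n-1\choose i-1}$ on all lower layers.
\end{itemize}

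\textbf{The lower layers.} The same argument rules out every $k$-matching ($k\le\ell$) of total size at most $km+k-\ell$, and this is what your unspecified lower-layer argument needs. It gives $\nu(\mathcal F^{(i)})<\ell/2$ for $i<m$, so the lower layers are outweighed by the Hilton--Milner gap ${n-m-\ell+1\choose m-1}$ (Lemma~\ref{l: HM calculation}). Combined with cross-dependent averaging over the families $\mathcal G_j=\{G\in\mathcal F^{(m)}:G\cap[\ell+m-1]=\{j\}\}$, $j<\ell$, it also shows that any lower set outside $\mathcal P(m,s,\ell)$ costs at least ${n-m-\ell+1\choose m-1}$ $m$-sets of $\mathcal P(m,s,\ell)$.
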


We should note that very recently, the authors of the present paper have made a general conjecture concerning the values of $e(n,s)$ for all $n,s$ and verified it for large $s$ `from the other end', i.e., for $n=ms+c$ and $s>s_0(c)$, see \cite{KS2}.

Given a family $\mathcal{F}$, we denote by $\mathcal{F}^{(k)}$ ($\mathcal{F}^{(\leq k)}$, $\mathcal{F}^{(\geq k)}$) the family of all sets in $\mathcal{F}$ of size exactly $k$ (at most $k$, at least $k$). In fact, we prove a stronger variant of Theorem~\ref{t: main}: every family avoiding an $s$-matching has at most as many sets in the first $m+1$ layers of the Boolean lattice as $\mathcal{P}(m,s,\ell)$. More precisely, we prove the following theorem.

\begin{theorem} \label{t: strong main}
    For every integer $m$ and every $\varepsilon > 0$ there exists $s_0$ such that the following holds. Put $n=sm+s-\ell$. If $s > s_0$, $\ell \leq \left(\frac{1}{2} - \varepsilon\right)s$, and $\mathcal{F} \subset 2^{[n]}$ is a family with $\nu(\mathcal{F}) < s$, then
    \[
    |\mathcal{F}^{(\leq m+1)}| \leq |\mathcal{P}(m,s,\ell)^{(\leq m+1)}|.
    \]
\end{theorem}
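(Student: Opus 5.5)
We reduce to a statement about the top two layers, $m$ and $m+1$, and handle the layers below $m$ with a counting argument. Since shifting preserves $\nu(\mathcal F)<s$ and the sizes of layers, we may assume $\mathcal F$ is shifted and an up-set. Then $|\mathcal F^{(\le m+1)}|=|\mathcal F^{(m+1)}|+|\mathcal F^{(m)}|+|\mathcal F^{(\le m-1)}|$. We compare this with $\mathcal P:=\mathcal P(m,s,\ell)$, which contains the full $(m+1)$-layer, all of $\mathcal A_1^{(m)}(n,\ell)$, and the lower sets $P$ with $|P|+|P\cap[\ell-1]|\ge m+1$.

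The key structural input is the following. Write $\mathcal G=\mathcal F^{(m)}$ and $\mathcal H=\binom{[n]}{m+1}\setminus\mathcal F^{(m+1)}$ for the missing $(m+1)$-sets. Suppose $\mathcal G$ contains an $\ell$-matching $G_1,\dots,G_\ell$. These sets use $\ell m$ elements, and the remaining $n-\ell m=(s-\ell)(m+1)$ elements split exactly into $s-\ell$ sets of size $m+1$. Thus every partition of the complement into $(m+1)$-sets must meet $\mathcal H$, since otherwise $\mathcal F$ would contain an $s$-matching.

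We run an averaging (Katona-type) argument over random partitions of $[n]$ into $\ell$ blocks of size $m$ and $s-\ell$ blocks of size $m+1$. In each such partition, either some $m$-block is missing from $\mathcal G$, or some $(m+1)$-block lies in $\mathcal H$. This gives a tradeoff inequality of the form $\nu(\mathcal G)\ge\ell\Rightarrow |\mathcal H|\gtrsim\dots$. The cleaner route, however, is to use the Erd\H os Matching Conjecture results directly.

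\textbf{Case 1: $\nu(\mathcal G)<\ell$.} Here $n\ge (2-o(1))\ell m\cdot\frac{1}{1-2\varepsilon}$. Because $\ell\le(1/2-\varepsilon)s$, we have $n\approx s(m+1)\ge (1+2\varepsilon)\cdot 2\ell m\cdot\frac{m+1}{2m}$, so $n>\frac53\ell m$ once $s$ is large. Hence Theorem~\ref{t: Frankl-Kupavskii EMC} applies with $\ell\ge s_0$ and gives $|\mathcal G|\le|\mathcal A_1^{(m)}(n,\ell)|$. If $\ell$ is small, Theorem~\ref{t: Frankl-Kupavskii EK}(iii) instead covers the entire statement. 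For the lower layers, we need $|\mathcal F^{(\le m-1)}|\le|\mathcal P^{(\le m-1)}|$ together with the slack coming from the first two layers. We use that for shifted $\mathcal F$, a set $F$ of size $m-j$ in $\mathcal F$ forces, via the matching condition, a matching structure in its shadow on $[\ell-1]$. Concretely, we show $|F\cap[\ell-1]|\ge j+1$, and otherwise we exhibit an $s$-matching using $F$ together with $(m+1)$-sets and $m$-sets. This will only hold approximately. Any excess of lower sets should be paid for by missing sets in layers $m$ and $m+1$, using a local LYM/Kruskal--Katona comparison: the lower layers have size $O(\binom{n}{m-1})$, which is an $O(m/s)$ fraction of $\binom nm$.

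\textbf{Case 2: $\nu(\mathcal G)\ge\ell$.} Here we must show that $\mathcal H$ is large. The main tool is the Hilton--Milner-type stability of \cite{FK6} for $n\ge 2ms(1+o(1))$. Since $n/\ell\ge 2m(1+\varepsilon')$, the stability result applies to $m$-uniform families with no $(\ell+1)$-matching. If $\mathcal G$ is far from $\mathcal A_1^{(m)}(n,\ell)$, it loses a constant fraction of $\binom{n-1}{m-1}$. The hard subcase is when $\mathcal G$ has a large matching, and here we use the random-partition argument. Each extra $m$-set beyond the $\mathcal A_1$ structure creates many $\ell$-matchings, and each such matching forces an $(m+1)$-set in $\mathcal H$ among the complements. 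Double counting then shows that the number of forced missing $(m+1)$-sets exceeds the gain.

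I expect the main obstacle to be this exchange inequality between the gain in layer $m$ (and below) and the loss in layer $m+1$, uniformly for $\ell$ up to $(1/2-\varepsilon)s$. The $\varepsilon$-room is exactly what keeps $n/\ell$ bounded away from $2m$, so that the stability of \cite{FK6} yields a linear-in-$\binom{n-1}{m-1}$ deficit that dominates the error terms.
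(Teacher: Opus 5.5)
There is a genuine gap. Your split into $\nu(\mathcal{F}^{(m)})\ge\ell$ and $\nu(\mathcal{F}^{(m)})<\ell$ matches the paper's, but the exchange argument you flag as the main obstacle is never supplied, and the tools you assign to Case 2 cannot supply it.

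Write $y(k)$ for the number of $k$-sets missing from $\mathcal{F}$. The random-partition inequality $\ell\,y(m)/\binom{n}{m}+(s-\ell)\,y(m+1)/\binom{n}{m+1}\ge 1$ holds with large slack at $\mathcal{P}(m,s,\ell)$. It would even permit $y(m+1)=0$ and $y(m)=\binom{n}{m}/\ell$, so it cannot give an exact bound. The Hilton--Milner stability theorem says nothing in Case 2. There $\mathcal{F}^{(m)}$ contains an $\ell$-matching, and nothing bounds its matching number. Even a family with no $(\ell+1)$-matching would only be compared with $\ell$ stars, which is larger than $\mathcal{P}^{(m)}$. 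Finally, ``each extra $m$-set creates many $\ell$-matchings \dots\ double counting'' is not an argument.

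The missing idea is that a \emph{single} $\ell$-matching is already very costly. Its complement $X$ has $|X|=(s-\ell)(m+1)$, and $\mathcal{F}\cap 2^X$ has no $(s-\ell)$-matching. So Frankl's bound $e_{m+1}(|X|,s-\ell)\le(s-\ell-1)\binom{|X|-1}{m}$ gives $y(m+1)\ge\binom{(m+1)(s-\ell)-1}{m}$, which has degree $m$ in $s$. The gain is then controlled by a dichotomy with a constant $t=t(m)$:
\begin{itemize}
\item If $\nu(\mathcal{F}^{(\le m)})<\ell+t$, the Frankl--Kupavskii theorem on the Erd\H{o}s Matching Conjecture applies, since $n\ge\frac{5}{3}m(\ell+t)$. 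It gives $|\mathcal{F}^{(m)}|-|\mathcal{P}^{(m)}|\le\binom{n-\ell+1}{m}-\binom{n-\ell-t+1}{m}$. Also $|\mathcal{F}^{(\le m-1)}|\le(\ell+t-1)\sum_{i<m}\binom{n-1}{i-1}$. Both have degree only $m-1$.
\item If $\nu(\mathcal{F}^{(\le m)})\ge\ell+t$, the complement of an $(\ell+t)$-matching has $t$ surplus elements. Hence $y(m+1)\ge(1+\frac{t}{m+1})\binom{n-(\ell+t)m-1}{m}$. For $t$ large this exceeds the crude bound $(s-1)\sum_{i\le m}\binom{n-1}{i-1}$ on all of $\mathcal{F}^{(\le m)}$.
\end{itemize}
The same argument excludes every $k$-matching in $\mathcal{F}$ of total size at most $km+k-\ell$. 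This stronger form is what later controls the lower layers.

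Case 1 is also not carried out, and your scale comparison is off. If $\nu(\mathcal{F}^{(m)})<\ell$ and $\mathcal{F}^{(m)}\not\subseteq\mathcal{P}^{(m)}$, Hilton--Milner stability yields a layer-$m$ deficit of only $\binom{n-m-\ell+1}{m-1}-1$. This is where that theorem belongs, and where $\ell\le(\frac12-\varepsilon)s$ is used. That deficit has the same order $\binom{n}{m-1}$ as $|\mathcal{F}^{(\le m-1)}|$. So ``an $O(m/s)$ fraction of $\binom{n}{m}$'' is beside the point, and the crude bound $|\mathcal{F}^{(i)}|\le(s-1)\binom{n-1}{i-1}$ is not enough. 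The paper needs $\nu(\mathcal{F}^{(i)})<\ell/(m-i+1)\le\ell/2$, which is the short-matching statement above applied to $i$-sets.

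Moreover, if $\mathcal{F}^{(m)}\subseteq\mathcal{P}^{(m)}$, no deficit is guaranteed at all. Your plan to ``exhibit an $s$-matching using $F$'' works only when layers $m$ and $m+1$ of $\mathcal{F}$ coincide with those of $\mathcal{P}$. The ``approximate'' version is exactly what has to be proved. The paper does it as follows:
\begin{itemize}
\item Shift a bad $i$-set $F$ to $F'=[x]\cup[\ell,\ell+i-x-1]$, where $x=|F\cap[\ell-1]|\le m-i$.
\item The families $\{G\in\mathcal{F}^{(m)}:G\cap[\ell+m-1]=\{j\}\}$, $j\in[x+1,x+i+\ell-m-1]$, must be cross-dependent. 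Otherwise, together with $F'$, they form a forbidden short matching.
\item By averaging, $\mathcal{F}$ then misses at least $\binom{n-m-\ell+1}{m-1}$ sets of $\mathcal{P}^{(m)}$, which again outweighs the lower layers.
\end{itemize}

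Finally, part (iii) of the Frankl--Kupavskii theorem does not dispose of small $\ell$. It bounds $e(n,s)$ and the first $m+2$ layers, and neither implies the bound on $|\mathcal{F}^{(\le m+1)}|$.
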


Theorem~\ref{t: strong main} obviously implies Theorem~\ref{t: main}, since $\mathcal{P}(m,s,\ell)$ contains all sets from ${[n] \choose \geq m + 2}$. We note that in \cite{FK9} Frankl and Kupavskii conjectured that the statement of Theorem~\ref{t: strong main} should hold, but managed to prove it, under the assumptions of Theorem~\ref{t: Frankl-Kupavskii EK}, only for the first $m+2$ layers of the Boolean lattice. Theorem~\ref{t: strong main} confirms their conjecture. We also note that if one replaces $m+1$ by $m$ in its statement, then the resulting claim is false.

\section{Overview of the proof}
We follow essentially the same overall strategy as in point~(iii) of Theorem~\ref{t: Frankl-Kupavskii EK}, but in several places we develop and use a more refined analysis. We work in the regime where $s$ is large. In this regime, the quantity $\sum_{i=0}^{m-1}{n \choose i}$ is small compared to ${n\choose m}$. At the same time, in our parametrization the $m$-th layer is the largest layer of the Boolean lattice that may contain an $s$-matching. For these reasons, the first and main part of the proof focuses on layers $m$ and $m+1$.

The family $\mathcal{P}(m, s, \ell)$ contains all $(m+1)$-element sets and, on the $m$-th layer, the largest family that does not contain an $\ell$-matching (in the regime under consideration, this family is a union of $\ell-1$ stars). If an extremal family $\mathcal{F}$ with $\nu(\mathcal{F})<s$ contains more sets than $\mathcal{P}(m, s, \ell)$ on these two layers, then it must contain an $\ell$-matching on the $m$-th layer. Consequently, $\mathcal{F}$ cannot contain all $(m+1)$-element sets. In fact, a simple averaging argument shows that $\mathcal{F}$ must miss many $(m+1)$-element sets. This is the crucial part of the proof: we need to find the best trade-off between the additional $m$-sets that we gain and the $(m+1)$-sets that we lose. This is also the key point at which our approach differs from that of \cite{FK9} and allows us to replace the condition $s \geq \ell m + 3\ell + 3$ by the much weaker condition $s \geq 3\ell$, for sufficiently large $s$.

The conclusion of the first step is that an extremal family $\mathcal{F}$ with $\nu(\mathcal{F}) < s$ and $|\mathcal{F}| \geq |\mathcal{P}(m, s, \ell)|$ cannot contain an $\ell$-matching on the $m$-th layer. Using the stability result of Frankl and Kupavskii (Theorem~\ref{t: Hilton-Milner for EMC}), we then deduce that if $\mathcal{F}^{(m)}$ is not contained in a union of $\ell-1$ stars, then $|\mathcal{F}^{(m)}|$ is substantially smaller than $|\mathcal{A}_{1}^{(m)}(n, \ell)|$. This loss is large enough to compensate for any possible gains on the layers of size at most $m-1$.

Finally, we analyze the lower layers of the Boolean lattice. We show that if $\mathcal{F}$ is not a subfamily of $\mathcal{P}(m, s, \ell)$, then it must miss either many sets from $\mathcal{P}(m, s, \ell)^{(m)}$ or many sets from ${[n] \choose m + 1}$, and therefore it is smaller than $\mathcal{P}(m, s, \ell)$.

\section{Preliminaries and auxiliary calculations}\label{sec3}

Recall the notion of shifting. Let $A = \{a_1, \ldots, a_k\}$ and $B = \{b_1, \ldots, b_k\}$ be $k$-element subsets of $[n]$, with their elements listed in increasing order. We say that $A$ can be shifted to $B$ if $a_i \geq b_i$ for all $i \in [k]$. A family $\mathcal{F}$ is {\it shifted} if $A \in \mathcal{F}$ implies $B \in \mathcal{F}$ for every $B$ such that $A$ can be shifted to $B$. It is well known \cite{F3} that it is enough to prove Theorem~\ref{t: strong main} for shifted families.

Next, we state the Hilton--Milner-type stability result of Frankl and Kupavskii \cite{FK6}. Let us first define the relevant family:
\begin{align*}
    \mathcal{H}^{(k)}(n, s)
    &= \Big\{H \in {[n] \choose k}: H \cap [s-2] \neq \emptyset \Big\} \cup \{[s, s+k-1]\} \\
    &\cup  \Big\{H \in {[s-1,n] \choose k}: s-1\in H,\ H \cap [s, s+k-1] \ne \emptyset \Big\}.
\end{align*}

The family $\mathcal{H}^{(k)}(n, s)$ is the union of $s-2$ stars with centers in $[s-2]$ and the Hilton--Milner family on $[s-1,n]$. Clearly, we have $\nu(\mathcal{H}^{(k)}(n, s)) < s$.

\begin{theorem}[Frankl and Kupavskii~\cite{FK6}] \label{t: Hilton-Milner for EMC}
    Let $n \geq 2sk(1+o(1))$, where $o(1)$ is taken with respect to $s \to \infty$. Assume that $\mathcal{G}\subset {[n]\choose k}$ satisfies $\nu(\mathcal{G}) < s$ and, additionally, that $\mathcal{G}$ is not isomorphic to a subfamily of $\mathcal{A}_1^{(k)}(n,s)$. Then
    \[
    |\mathcal{G}| \leq |\mathcal{H}^{(k)}(n, s)| = |\mathcal{A}_1^{(k)}(n,s)| - {n - k - s + 1 \choose k - 1} + 1.
    \]
\end{theorem}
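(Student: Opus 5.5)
The statement is quoted from \cite{FK6}, so we only outline how one would prove it. We aim to peel off $s-2$ full stars and reduce to the classical Hilton--Milner theorem on the remaining ground set.

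\textbf{Step 1: reduction to shifted families.} Shifting preserves $|\mathcal{G}|$ and the property $\nu<s$. It may, however, turn a family that is not contained in $s-1$ stars into one that is. We apply shifts $S_{ij}$ one by one and stop just before a shift would make the family a subfamily of a union of $s-1$ stars. At that point the family is ``almost shifted''. Standard arguments (as for the Hilton--Milner theorem) show that one can also assume $\mathcal{G}$ is shifted, except possibly for a bounded number of coordinates, which does not affect the counting below.

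\textbf{Step 2: find $s-2$ heavy elements.} Call $x\in[n]$ heavy if $\deg_{\mathcal{G}}(x)\ge \binom{n-1}{k-1}-\delta\binom{n-1}{k-1}$ for a small $\delta>0$; for shifted $\mathcal{G}$ the heavy elements form an initial segment $[t]$. Suppose $t\le s-3$. Delete the stars of $1,\dots,t$. The remainder is a family on $[t+1,n]$ with matching number $<s-t$, no heavy element, and ground set size $n-t\ge 2(s-t)k(1+o(1))$. For such families one proves a bound of the form
\[
|\mathcal{G}(\bar 1,\dots,\bar t)|\le (1-c)(s-t-1)\binom{n-t-1}{k-1}
\]
for some $c=c(\delta)>0$. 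The tools are Frankl's bound $e_k(n,s)\le (s-1)\binom{n-1}{k-1}$ in its degree-sensitive (averaging over random matchings / Katona-circle) form, together with the concentration phenomenon: in a near-extremal family the $s-1$ largest degrees carry almost all the weight. This deficit of order $\binom{n-1}{k-1}$ is far larger than $\binom{n-k-s+1}{k-1}$, so the claimed bound holds in this case. Consequently we may assume $t\ge s-2$.

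\textbf{Step 3: apply Hilton--Milner on the remaining ground set.} Let $\mathcal{G}'=\{G\in\mathcal{G}: G\cap[s-2]=\emptyset\}$, a family on $n':=n-s+2$ elements. We claim $\nu(\mathcal{G}')\le 1$. If $\mathcal{G}'$ contained two disjoint sets $G_1,G_2$, then since each of $1,\dots,s-2$ is heavy, we could greedily choose $s-2$ further sets, one through each $i\in[s-2]$, disjoint from $G_1\cup G_2$ and from each other. This is possible because at most $O(sk)\binom{n-2}{k-2}\ll\delta\binom{n-1}{k-1}$ sets through $i$ meet the forbidden elements, using $n\ge 2sk$. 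The result would be an $s$-matching, a contradiction. So $\mathcal{G}'$ is intersecting. It is not a star, since otherwise $\mathcal{G}$ would lie in $s-1$ stars. Since $n'\ge 2k$, the Hilton--Milner theorem gives
\[
|\mathcal{G}'|\le\binom{n'-1}{k-1}-\binom{n'-k-1}{k-1}+1 .
\]
Adding the at most $\binom{n}{k}-\binom{n'}{k}$ sets meeting $[s-2]$ yields exactly $|\mathcal{H}^{(k)}(n,s)|$. The identity $|\mathcal{H}^{(k)}(n,s)|=|\mathcal{A}_1^{(k)}(n,s)|-\binom{n-k-s+1}{k-1}+1$ then follows from $\binom{n'}{k}-\binom{n'-1}{k}=\binom{n'-1}{k-1}$ and $n'-k-1=n-k-s+1$.

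\textbf{Main obstacle.} The hard part is Step 2: obtaining a deficit of order $\binom{n-1}{k-1}$ whenever fewer than $s-2$ elements are heavy, in the whole range $n\ge 2sk(1+o(1))$. This is where the $(1+o(1))$ slack and $s\to\infty$ are needed. I would first try a weighted Frankl averaging argument over random perfect-ish matchings of $[n]$, keeping track of how many matching edges can hit high-degree elements.
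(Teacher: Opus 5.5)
The paper does not prove this statement. It is quoted from \cite{FK6} and used as a black box, so your outline has to stand on its own, and it does not. Step~2, which you yourself flag as the main obstacle, is unproved, and the bound you propose there would not suffice even if it were proved. After removing the stars of the $t$ heavy elements, your scheme needs
\[
|\mathcal{G}(\bar 1,\dots,\bar t)|\le \sum_{j=t}^{s-2}\binom{n-1-j}{k-1}-\binom{n-k-s+1}{k-1}+1 .
\]
This is a bound below $|\mathcal{A}_1^{(k)}(n-t,s-t)|$, not below Frankl's bound $(s-t-1)\binom{n-t-1}{k-1}$. These two benchmarks differ by
\[
\sum_{j=t}^{s-2}\Bigl[\binom{n-t-1}{k-1}-\binom{n-1-j}{k-1}\Bigr]\ \ge\ \binom{s-t-1}{2}\binom{n-s}{k-2}.
\]
For $n=\Theta(sk)$ and $s-t=\Theta(s)$ this is a constant fraction of $(s-t-1)\binom{n-t-1}{k-1}$. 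For instance, with $t=0$, $n=2sk$ and $s,k$ large, $|\mathcal{A}_1^{(k)}(n,s)|$ is only about $2(1-e^{-1/2})\approx 0.79$ times $(s-1)\binom{n-1}{k-1}$.

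So a bound $(1-c)(s-t-1)\binom{n-t-1}{k-1}$ with a small $c=c(\delta)>0$ gives nothing. Your remark that the deficit is ``far larger than $\binom{n-k-s+1}{k-1}$'' measures it against the wrong baseline. What Step~2 really requires is a stability form of the Erd\H{o}s Matching Conjecture itself for $n\ge(2+o(1))sk$: a family with $\nu<s'$ and size close to $|\mathcal{A}_1^{(k)}(n',s')|$ must have $s'-1$ elements of nearly full degree. Frankl's bound is off by a factor bounded away from $1$ in this range, so a small improvement on it cannot deliver this. One needs an argument of EMC strength, which is exactly the content of \cite{FK6}, and your outline supplies none. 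Comparisons with Frankl's bound only work when $n/(sk)$ is large, where that bound is close to $|\mathcal{A}_1|$.

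There is also a smaller error in Step~3. The estimate $O(sk)\binom{n-2}{k-2}\ll\delta\binom{n-1}{k-1}$ is false: for $n\approx 2sk$ one has $sk\binom{n-2}{k-2}\approx\frac{k-1}{2}\binom{n-1}{k-1}$, so the union bound is useless. The greedy step can be repaired by counting the sets through $i$ that \emph{avoid} the forbidden set $W$, where $|W|\le(s-1)k$. There are $\binom{n-1-|W|}{k-1}$ of them, roughly a $2^{-(k-1)}$ fraction of the star, so it suffices to take $\delta$ below that fraction. This forces $\delta$ to be exponentially small in $k$, which makes ``fewer than $s-2$ heavy elements'' an even weaker hypothesis to exploit in Step~2.

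Step~1's claim that one may assume shiftedness ``except for a bounded number of coordinates'' is also unjustified. If $S_{ij}(\mathcal{G})$ is covered by an $(s-1)$-set $T$, you only learn that $\mathcal{G}$ is covered by $T\cup\{j\}$, and that case needs separate treatment. As written, though, Steps~2--3 use shiftedness only for labeling.
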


We emphasize that any improvement of the constant $2$ in Theorem~\ref{t: Hilton-Milner for EMC} would imply an improvement of the constant $\frac{1}{2}$ in the condition $\ell \leq \frac{s}{2}$ in Theorems~\ref{t: main} and~\ref{t: strong main}.

We will also need the following universal bound on $e_k(n, s)$.

\begin{lemma}[Frankl~\cite{F3}] \label{l.frankl_shifting}
    Let $n \geq ks$. Then
    \[
    e_k(n, s) \leq (s-1){n - 1 \choose k - 1}.
    \]
\end{lemma}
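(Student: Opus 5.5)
We will prove the bound by a double induction, on $k$ and on $n$. First, it is enough to consider shifted families. Indeed, shifting keeps the size of a family, keeps it inside ${[n]\choose k}$, and does not increase the matching number (this is standard, see~\cite{F3}). So let $\mathcal{F}\subset{[n]\choose k}$ be shifted with $\nu(\mathcal{F})<s$, and let $n\ge ks$. For $k=1$ the claim is trivial: $\mathcal{F}$ has at most $s-1$ singletons, and $(s-1){n-1\choose 0}=s-1$.

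\emph{Base case $n=ks$ (any $k$).} Here we use Katona-type averaging over perfect matchings. Choose a uniformly random partition of $[ks]$ into $s$ blocks of size $k$. Any fixed $k$-set is one of the blocks with probability $s/{ks\choose k}$. The blocks form an $s$-matching, so at most $s-1$ of them lie in $\mathcal{F}$. Taking expectations gives $|\mathcal{F}|\cdot s/{ks\choose k}\le s-1$, that is,
\[
|\mathcal{F}|\le \frac{s-1}{s}{ks\choose k}=(s-1){ks-1\choose k-1}.
\]

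\emph{Induction step $n>ks$.} Split $\mathcal{F}$ as $\mathcal{F}(\bar n)=\{F\in\mathcal{F}: n\notin F\}$ and $\mathcal{F}(n)=\{F\setminus\{n\}: n\in F\in\mathcal{F}\}\subset{[n-1]\choose k-1}$.
\begin{itemize}
\item The family $\mathcal{F}(\bar n)$ lives on $[n-1]$ with $n-1\ge ks$ and has no $s$-matching. By induction on $n$, $|\mathcal{F}(\bar n)|\le (s-1){n-2\choose k-1}$.
\item The key point is that $\nu(\mathcal{F}(n))<s$, which uses shiftedness. Suppose $G_1,\dots,G_s\in\mathcal{F}(n)$ are pairwise disjoint. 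Their union has $s(k-1)$ elements, so at least $n-s(k-1)\ge s$ elements of $[n]$ lie outside it. Pick distinct such elements $x_1,\dots,x_s$. Each $G_i\cup\{n\}\in\mathcal{F}$ can be shifted to $G_i\cup\{x_i\}$, since $x_i\le n$. Hence $G_i\cup\{x_i\}\in\mathcal{F}$, and these $s$ sets are pairwise disjoint, a contradiction.
\item Since $n-1\ge ks>(k-1)s$, the induction hypothesis on $k$ gives $|\mathcal{F}(n)|\le (s-1){n-2\choose k-2}$.
\end{itemize}
Adding the two bounds and using Pascal's rule,
\[
|\mathcal{F}|\le (s-1)\left({n-2\choose k-1}+{n-2\choose k-2}\right)=(s-1){n-1\choose k-1}.
\]

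The only non-routine step is showing that the link $\mathcal{F}(n)$ again has no $s$-matching. This is exactly where the hypothesis $n\ge ks$ is used: it guarantees $s$ free elements to replace $n$. The rest is bookkeeping of the two inductions, with the averaging argument covering the base case $n=ks$.
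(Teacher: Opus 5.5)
Your proof is correct: the averaging over random perfect matchings settles the base case $n=ks$. The shiftedness argument for $\nu(\mathcal{F}(n))<s$, which uses the $n-s(k-1)\ge s$ free elements, lets the double induction on $k$ and $n$ close via Pascal's rule. The paper gives no proof of this lemma and only attributes it to Frankl; as far as I recall, your argument is essentially Frankl's standard proof (averaging at $n=ks$, then splitting a shifted family at the element $n$).
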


It will be more convenient for us to use Lemma~\ref{l.frankl_shifting} in the following slightly modified form. For a family $\m F\subset 2^{[n]}$, we denote by $y_{\mathcal{F}}(k)$ the number of $k$-element sets that are not in $\mathcal{F}$. That is, $y_{\mathcal{F}}(k) = {n \choose k} - |\mathcal{F}^{(k)}|$.

\begin{lemma} \label{l.frankl_shifting modified}
    Let $\mathcal{F} \subset 2^{[n]}$ be a family without an $s$-matching, and let $k, t \geq 0$ be integers such that $n = ks + t$. Then
    \[
        y_{\mathcal{F}}(k) \geq \left(1+\frac{t}{k}\right){n-1 \choose k - 1}.
    \]
\end{lemma}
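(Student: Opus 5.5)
The bound follows from Lemma~\ref{l.frankl_shifting} applied to the $k$-th layer, combined with a direct binomial computation. Since $\nu(\mathcal{F})<s$, also $\nu(\mathcal{F}^{(k)})<s$. Because $n=ks+t\ge ks$ with $t\ge 0$, Lemma~\ref{l.frankl_shifting} applies and gives
\[
|\mathcal{F}^{(k)}|\le (s-1){n-1\choose k-1}.
\]
(If $k=0$, both sides of the claim should be read with the conventions ${n-1\choose -1}=0$ and $t/k$ suppressed, and the statement is trivial; so assume $k\ge1$.) Consequently
\[
y_{\mathcal{F}}(k)={n\choose k}-|\mathcal{F}^{(k)}|\ge {n\choose k}-(s-1){n-1\choose k-1}.
\]

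It remains to rewrite ${n\choose k}$ via the identity ${n\choose k}=\frac{n}{k}{n-1\choose k-1}$. Substituting $n=ks+t$ gives
\[
{n\choose k}=\Bigl(s+\frac tk\Bigr){n-1\choose k-1}.
\]
Subtracting $(s-1){n-1\choose k-1}$ then leaves exactly $\bigl(1+\frac tk\bigr){n-1\choose k-1}$, which is the claimed lower bound.

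There is no real obstacle. The only point needing care is checking that the hypothesis $n\ge ks$ of Lemma~\ref{l.frankl_shifting} holds, which is immediate from $t\ge0$, together with treating the degenerate case $k=0$ separately.
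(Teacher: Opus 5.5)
Your proposal is correct and is essentially the paper's own proof: apply Lemma~\ref{l.frankl_shifting} to $\mathcal{F}^{(k)}$ (valid since $n=ks+t\ge ks$) and then use ${n\choose k}=\frac{n}{k}{n-1\choose k-1}$ to obtain $\bigl(\frac{n}{k}-s+1\bigr){n-1\choose k-1}=\bigl(1+\frac{t}{k}\bigr){n-1\choose k-1}$. Your remark setting aside the degenerate case $k=0$ is a harmless addition that the paper leaves implicit.
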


\begin{proof}
    By Lemma~\ref{l.frankl_shifting} we have
    \[
    |\mathcal{F}^{(k)}| \leq e_k(n, s) \leq (s-1){n-1 \choose k - 1},
    \]
    and therefore
    \begin{align*}
        y_{\mathcal{F}}(k)
        &= {n \choose k} - |\mathcal{F}^{(k)}|\geq {n \choose k} - (s-1){n-1 \choose k - 1} \\
        &= \left(\frac{n}{k}-s+1\right){n-1 \choose k - 1} = \left(1+\frac{t}{k}\right){n-1 \choose k - 1}.
    \end{align*}
\end{proof}

In the remainder of this section, we provide several useful calculations with binomial coefficients that will be used later. The following lemma quantifies the fact that the lower layers of the Boolean lattice are small.

\begin{lemma} \label{l: low layers are small}
    For $n \geq sk - 1$ we have
    \[
    \sum_{i=0}^{k}{n \choose i} \leq \frac{s-1}{s-2}{n \choose k}.
    \]
\end{lemma}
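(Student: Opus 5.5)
We compare consecutive binomial coefficients and sum a geometric series. For $1\le i\le k$ we have
\[
\frac{{n \choose i-1}}{{n \choose i}} = \frac{i}{n-i+1} \le \frac{k}{n-k+1},
\]
since $i/(n-i+1)$ is increasing in $i$. Put $q := \frac{k}{n-k+1}$. Iterating this bound gives ${n\choose k-j}\le q^j{n\choose k}$ for all $0\le j\le k$, and therefore
\[
\sum_{i=0}^{k}{n\choose i}\le {n\choose k}\sum_{j\ge 0}q^j=\frac{1}{1-q}{n\choose k},
\]
provided $q<1$.

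It remains to bound $q$ using $n\ge sk-1$. We have $n-k+1\ge (s-1)k$, so $q\le \frac{1}{s-1}$, and hence
\[
\frac{1}{1-q}\le \frac{1}{1-\frac{1}{s-1}}=\frac{s-1}{s-2},
\]
which is exactly the claimed bound. This requires $s\ge 3$, which is implicit in the statement, since for $s\le 2$ the right-hand side is undefined or negative. The case $k=0$ is trivial because both sides reduce to ${n\choose 0}$ up to the factor $\frac{s-1}{s-2}\ge 1$.

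The only point needing care is the monotonicity of $i/(n-i+1)$, so that the worst ratio occurs at $i=k$; this holds because the numerator increases and the denominator decreases in $i$. There is no substantial obstacle; the hypothesis $n\ge sk-1$ is used exactly to get $n-k+1\ge (s-1)k$.
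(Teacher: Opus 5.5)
Your proof is correct and follows essentially the same route as the paper. Both bound the ratio ${n\choose i-1}/{n\choose i}=\frac{i}{n-i+1}$ by $\frac{k}{n-k+1}\le\frac{1}{s-1}$, using $n\ge sk-1$, and then sum the geometric series to get the factor $\frac{s-1}{s-2}$.
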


\begin{proof}
    For $i \leq k$ we have
    \[
    {n \choose i - 1} = \frac{i}{n-i+1}{n \choose i} \leq \frac{k}{n-k+1}{n \choose i} \leq \frac{1}{s-1}{n \choose i}.
    \]
    Therefore,
    \[
    \sum_{i=0}^{k}{n \choose i}
    \leq \sum_{i=0}^{k}\left(\frac{1}{s-1}\right)^{k-i}{n \choose k}
    \leq \sum_{j=0}^{\infty}\left(\frac{1}{s-1}\right)^{j}{n \choose k}
    =
    \frac{s-1}{s-2}{n \choose k}.
    \]
\end{proof}

The next lemma will be used in the stability part of the proof: we will compare losses in the $m$-th level of the Boolean lattice with gains in the lower layers relative to $\mathcal{P}(s, m, \ell)$.

\begin{lemma} \label{l: HM calculation}
    Let $n = sm + s - \ell$. If $\ell \geq 2$ and $s \geq \frac{3}{2}\ell + m$, then
    \begin{equation} \label{eq: HM calculation}
        {n - m - \ell + 1 \choose m -1} > \frac{\ell}{2}\sum_{i=1}^{m-1}{n-1 \choose i - 1}.
    \end{equation}
\end{lemma}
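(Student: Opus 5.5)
The plan is to bound the right-hand side by its top term and then compare that top term with the left-hand side through a product of ratios.

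\emph{Bounding the sum.} Each summand on the right of \eqref{eq: HM calculation} is a binomial coefficient of $n-1$, and $n-1 = sm+s-\ell-1 \ge s(m-1)$ since $s \ge \ell+1$. So the argument of Lemma~\ref{l: low layers are small}, applied with $n-1$ in place of $n$ and $k=m-2$, gives
\[
\sum_{i=1}^{m-1}{n-1 \choose i-1} = \sum_{j=0}^{m-2}{n-1\choose j} \le \frac{s-1}{s-2}{n-1\choose m-2}.
\]
The ratio bound ${n-1\choose j-1}\le \frac{1}{s-1}{n-1\choose j}$ for $j\le m-2$ is even easier here than in that lemma. The case $m=1$ is trivial, since the right-hand side is empty and the left-hand side is $1>0$. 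It therefore suffices to show
\[
{n-m-\ell+1\choose m-1} > \frac{\ell}{2}\cdot\frac{s-1}{s-2}{n-1\choose m-2}.
\]

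\emph{Splitting the left side.} I write ${n-m-\ell+1\choose m-1} = \frac{n-m-\ell+1-(m-2)}{m-1}{n-m-\ell+1\choose m-2}$. The leading factor is $\frac{n-2m-\ell+3}{m-1}$, and $n-2m-\ell+3 = s(m+1)-2\ell-2m+3$.

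\emph{Comparing the two binomials of order $m-2$.} I would write
\[
\frac{{n-m-\ell+1\choose m-2}}{{n-1\choose m-2}} = \prod_{j=0}^{m-3}\frac{n-m-\ell+1-j}{n-1-j}.
\]
Each factor equals $1-\frac{m+\ell-2}{n-1-j}$, which is at least $1-\frac{m+\ell}{n-m}$. Since $n-m = sm+s-\ell-m \ge sm$ roughly, each factor is about $1-\frac{\ell+m}{sm}$. Raising to the power $m-2$ and using Bernoulli's inequality, the product is at least about $1-\frac{\ell+m}{s}\cdot\frac{m-2}{m}$.

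\emph{Combining, and the main obstacle.} The inequality reduces to
\[
\frac{s(m+1)-2\ell-2m+3}{m-1}\Bigl(1-\frac{(m-2)(\ell+m)}{m(s-1)}\Bigr) > \frac{\ell}{2}\cdot\frac{s-1}{s-2}.
\]
This last inequality is where I expect the real work to lie. The hypothesis $s\ge \frac32\ell+m$ gives $\ell \le \frac23(s-m)$, so the right side is at most about $\frac{s-m}{3}$. On the left, the first factor is at least $\frac{s(m+1)-\frac43 s-\frac{2}{3}m+3}{m-1}$, which is comparable to $s$ or more. The second factor is at least $1-\frac{(m-2)(\frac23 s+\frac{m}{3})}{ms}\ge \frac13$ roughly.

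The danger is that these crude bounds multiply to something close to $\frac{s}{3}\cdot\frac{m-1/3}{m-1}$, which is only barely above $\frac{s-m}{3}$. So I would redo the estimate without Bernoulli. The cleaner route is to keep the exact product and use $n-1-j \ge n-m+1$ together with the sharper bound $\frac{\ell+m-2}{n-m+1}\le \frac{2}{3(m+1)}$ or so, which follows from $\ell \le \frac23 s$. That makes the product at least $(1-\frac{2}{3(m+1)})^{m-2}\ge e^{-2/3}$, comparable to $1/2$, and not $1/3$.

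I would then check small $m$ (say $m=2,3$) directly. For $m=2$ the inequality reads $n-\ell-1 > \frac{\ell}{2}$, which is immediate. The role of the $\frac{s-1}{s-2}$ factor and the edge case $s$ near $\frac32\ell+m$ need the most care.
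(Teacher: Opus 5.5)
Your reduction is sound and matches the paper. Bounding the sum by $\frac{s-1}{s-2}{n-1\choose m-2}$ is exactly the paper's first step. Your splitting of the left-hand side is an exact identity, so you face the same comparison as the paper, only normalized by ${n-1\choose m-2}$ instead of ${n\choose m-1}$.

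The gap is that you never prove the resulting inequality, which is the only step with real content, and the strategy you sketch for it does not work. You bound each factor separately by inserting the extreme value $\ell\approx\frac23 s$. But the hypothesis $s\ge\frac32\ell+m$ also allows $m$ to be comparable to $s$, and there your estimates are false. Take $\ell=2$ and $s=m+3$:
\begin{itemize}
\item Your second factor $1-\frac{(m-2)(\ell+m)}{m(s-1)}$ equals $\frac2m$, not roughly $\frac13$.
\item The ratio $\frac{\ell+m-2}{n-m+1}$ equals $\frac{m}{(m+1)(m+2)}$. This exceeds $\frac{2}{3(m+1)}$ for $m\ge 5$, so the bound you say follows from $\ell\le\frac23 s$ fails.
\item The exact product tends to $e^{-1}$ as $m\to\infty$, rather than staying above $e^{-2/3}$.
\end{itemize}
The inequality is still true in this regime, but only because the right-hand side $\frac{\ell}{2}\cdot\frac{s-1}{s-2}$ is then tiny. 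Factor-by-factor worst-case estimates lose exactly this coupling between $\ell$ and $m$.

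The missing idea is to combine the two losses additively, so that the hypothesis is used only once. Put $D=n-m+2$, $x=m+\ell-1$ and $y=(m-2)(m+\ell-2)$. Your prefactor is $\frac{D-x}{m-1}$. Each factor of the product is at least $1-\frac{m+\ell-2}{D}$, so Bernoulli gives the product $\ge 1-\frac{y}{D}$. Since $(D-x)(D-y)\ge D(D-x-y)$, you get
\[
\frac{{n-m-\ell+1\choose m-1}}{{n-1\choose m-2}}\ \ge\ \frac{D-x-y}{m-1}\ =\ \frac{n}{m-1}-(m+\ell-1)\ >\ s-m-\ell+1\ \ge\ \frac{\ell}{2}+1,
\]
using $n>s(m-1)$ and $s\ge\frac32\ell+m$. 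Since $s\ge\frac32\ell$ and $\ell\ge 2$, we have $\frac{\ell}{2}+1\ge\frac{\ell}{2}+\frac{\ell}{2(s-2)}=\frac{\ell}{2}\cdot\frac{s-1}{s-2}$. This finishes the proof for $m\ge 2$. The paper does the same thing in its own normalization: after Bernoulli, it shows that the two subtracted fractions sum to less than $\frac{1}{s}\bigl(\frac32\ell+m\bigr)\le 1$.
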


\begin{proof}
    By Lemma~\ref{l: low layers are small}, the right-hand side of \eqref{eq: HM calculation} is at most
    \[
    \frac{\ell}{2}\cdot\frac{s-1}{s-2}{n - 1 \choose m - 2}
    = \frac{\ell(s-1)(m-1)}{2(s-2)n}{n \choose m - 1}.
    \]
    The left-hand side of \eqref{eq: HM calculation} is
    \begin{align*}
        {n - m - \ell + 1 \choose m - 1}
        &= {n \choose m - 1}\prod_{i=0}^{m-2}\frac{n-m-\ell+1-i}{n-i} \\
        &= {n \choose m - 1}\prod_{i=0}^{m-2}\left(1-\frac{m+\ell-1}{n-i}\right) \\
        &\geq {n \choose m - 1} \left(1-\frac{m+\ell-1}{n-m+2} \right)^{m-1} \\
        &\geq {n \choose m - 1}\left(1 - \frac{(m-1)(m+\ell-1)}{n-m+2} \right).
    \end{align*}
    Thus, it is enough to prove
    \begin{equation} \label{eq: HM calculation 2}
        \frac{\ell(s-1)(m-1)}{2(s-2)n} < 1 - \frac{(m-1)(m+\ell-1)}{n-m+2}.
    \end{equation}
    By the assumptions of the lemma we have
    \[
    n - m + 2 = sm + s - \ell - m + 2 > sm
    \]
    and
    \[
    \frac{\ell}{2(s-2)} \leq \frac{\ell}{3\ell-4} \leq 1.
    \]
    Therefore,
{\small
    \begin{align*}
        &\frac{\ell(s-1)(m-1)}{2(s-2)n} + \frac{(m-1)(m+\ell-1)}{n-m+2} \\
        &\leq \frac{\ell(s-1)(m-1)}{2(s-2)sm} + \frac{(m-1)(m+\ell-1)}{sm} \\
        &< \frac{\ell(s-1)}{2(s-2)s} + \frac{m+\ell-1}{s} \\
        &= \frac{1}{s}\left(\frac{\ell}{2} + \frac{\ell}{2(s-2)} + m + \ell - 1\right) \\
        &\leq \frac{1}{s}\left(\frac{3}{2}\ell + m\right) \leq 1.
    \end{align*}
}
    We conclude that \eqref{eq: HM calculation 2} holds.
\end{proof}

\section{Matchings on the $m$-th layer}

Let $\mathcal{F} \subset 2^{[n]}$ be a shifted family without an $s$-matching, and assume that
\[
|\mathcal{F}^{(\leq m + 1)}| \geq |\mathcal{P}(m, s, \ell)^{(\leq m + 1)}|.
\]
To prove Theorem~\ref{t: strong main}, we need to show that $\mathcal{F} = \mathcal{P}(m, s, \ell)$. In this section we prove that $\mathcal{F}^{(m)}$ does not contain an $\ell$-matching.



\begin{lemma} \label{l: matchings on layer m}
    For every integer $m$ there exists $s_0$ such that for all $s \geq s_0$ and all $\ell \leq \frac{3}{5}s$ the following holds. Let $\mathcal{F} \subset 2^{[n]}$ be a family avoiding an $s$-matching and satisfying
    \[
    |\mathcal{F}^{(\leq m + 1)}| \geq |\mathcal{P}(m, s, \ell)^{(\leq m + 1)}|.
    \]
    Then $\nu(\mathcal{F}^{(m)}) < \ell$. Moreover, for any $k \leq \ell$ there is no $k$-matching $\{F_1, \ldots, F_k\}$ in  $\mathcal{F}$ such that
    \[
    \sum_{i=1}^k |F_i| \leq km + k - \ell.
    \]
\end{lemma}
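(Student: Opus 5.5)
We argue by contradiction. Suppose $\mathcal{F}$ contains a $k$-matching $F_1,\dots,F_k$ with $k\le \ell$ and $\sum_{i}|F_i|\le km+k-\ell$. The first assertion is the special case $k=\ell$, $|F_i|=m$. Put $Y=[n]\setminus(F_1\cup\dots\cup F_k)$. Then
\[
|Y|\ge n-(km+k-\ell)=(s-k)(m+1).
\]
Since $\nu(\mathcal{F})<s$, the family $\mathcal{F}^{(m+1)}\cap{Y\choose m+1}$ contains no $(s-k)$-matching. Take $Y'\subset Y$ with $|Y'|=(s-k)(m+1)$ exactly. Then $\mathcal{F}$ misses at least one set from every perfect matching of $Y'$ into $(m+1)$-sets. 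Averaging over all such perfect matchings shows that $\mathcal{F}$ misses at least a $\frac{1}{s-k}$ fraction of ${Y'\choose m+1}$. If $|Y|$ is larger, Lemma~\ref{l.frankl_shifting modified} applied to $Y$ with $k'=m+1$ gives the bound directly. Hence
\[
y_{\mathcal{F}}(m+1)\ \gtrsim\ \frac{1}{s-k}{(s-k)(m+1)\choose m+1}\ \approx\ \frac{(s-k)^m (m+1)^{m}}{m!}.
\]

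The gain is bounded as follows. On layer $m$, $\mathcal{P}^{(m)}$ consists of the $m$-sets meeting $[\ell-1]$. Hence
\[
|\mathcal{F}^{(m)}|-|\mathcal{P}^{(m)}|\le{n-\ell+1\choose m}\approx \frac{(n-\ell)^m}{m!}.
\]
By Lemma~\ref{l: low layers are small}, layers below $m$ contribute only $O({n\choose m-1})=O(s^{m-1})$, which is negligible for $s\ge s_0(m)$. So it suffices to show that the loss on layer $m+1$ beats the gain on layer $m$ by a constant factor. The crude averaging bound is too weak: it gives roughly $((s-k)(m+1))^m$ versus $(s(m+1)-2\ell)^m$, which fails when $k$ is close to $\ell\sim 3s/5$. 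So the comparison has to use the interplay between the two layers.

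\textbf{Main obstacle: the trade-off.} The refined argument goes as follows. For every $m$-set $G\in\mathcal{F}^{(m)}\setminus\mathcal{P}^{(m)}$, i.e.\ every $G\subset[\ell,n]$, we can shift within the matching. Because $\mathcal{F}$ is shifted, we may take the $F_i$ to consist of small elements, so many extra $m$-sets $G$ are disjoint from a suitably chosen matching. Each such $G$ kills $(m+1)$-sets: the complement of $G$ together with $k-1$ other sets must not contain an $(s-k)$-matching. I would set this up as a weighted double count over pairs (a partition of $[n]$ into $k$ sets of the matching type plus $s-k$ sets of size $m+1$), or, more concretely, over partitions of $[n]$ into $\ell$ sets of size $m$ and $s-\ell$ sets of size $m+1$. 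A uniformly random such partition contains at least one set outside $\mathcal{F}$. Let $\alpha$ be the density of $\mathcal{F}^{(m)}$ and $\beta$ the density of missing $(m+1)$-sets. The probability that all $\ell$ of the $m$-sets lie in $\mathcal{F}$ is governed by $\alpha$ together with the shifted structure; the extra $m$-sets beyond the $\ell-1$ stars must be produced en masse, so this probability is at least a positive constant once the excess is large. Union bound then gives $(s-\ell)\beta\ge \Pr[\text{all } m\text{-sets in }\mathcal{F}]$. Solving the resulting optimisation in the densities should show that the net change is negative whenever $\ell\le\frac35 s$. The constant $3/5$ would come from comparing ${n\choose m+1}/(s-\ell)$ against ${n\choose m}$-scale gains; this is the step I expect to require the careful calculation. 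For general $k<\ell$ the same argument runs with $\ell$ replaced by the defect $km+k-\ell$, using that set sizes below $m$ only make the remainder larger.

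Finally, once the inequality
\[
y_{\mathcal{F}}(m+1)>\bigl(|\mathcal{F}^{(m)}|-|\mathcal{P}^{(m)}|\bigr)+\sum_{i<m}{n\choose i}
\]
is established for $s\ge s_0(m)$, we obtain $|\mathcal{F}^{(\le m+1)}|<|\mathcal{P}^{(\le m+1)}|$, which contradicts the hypothesis.
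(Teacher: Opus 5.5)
\textbf{The proposal has a genuine gap: the trade-off between layer $m$ and layer $m+1$ is never proved.}

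Your first step is sound and matches how the paper begins. After deleting the matching, at least $(s-k)(m+1)$ points remain, so Lemma~\ref{l.frankl_shifting modified} gives $y_{\mathcal F}(m+1)\ge\binom{(m+1)(s-\ell)-1}{m}=\Theta(s^m)$, and all layers below $m$ together contribute only $O(s^{m-1})$.

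The random-partition paragraph, however, is only a heuristic. Its key claim is that once the excess is large, the probability that all $\ell$ of the $m$-sets in a random partition lie in $\mathcal F$ is bounded below by a constant. This is unsupported, and false as a general principle. Take $\mathcal F^{(m)}$ to be a union of $cs$ full stars with $\ell/s<c<1$ and $\ell$ of order $s$. The excess over $\mathcal P^{(m)}$ is then a constant fraction of $\binom nm$. Yet the $\ell m$ points covered by the $m$-sets of a random partition contain on average at most $(cs-1)\ell m/n<\frac{m}{m+2/5}\,\ell$ centres, a constant factor fewer than $\ell$. So the probability that each of the $\ell$ sets hits a centre is exponentially small in $\ell$. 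The union bound $(s-\ell)\beta\ge\Pr[\cdot]$ then yields nothing, and the ``optimisation in the densities'' is never carried out.

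Two smaller points:
\begin{itemize}
\item The constant $\frac35$ does not come from comparing $\binom n{m+1}/(s-\ell)$ with $\binom nm$. In the paper it is inherited solely from the range $n\ge\frac53 sk$ of Theorem~\ref{t: Frankl-Kupavskii EMC}.
\item You may not assume $\mathcal F$ is shifted here. The lemma is stated for arbitrary $\mathcal F$, and shifting can destroy the matching you start from.
\end{itemize}

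\textbf{The missing idea} is a dichotomy on $\nu(\mathcal F^{(\le m)})$ with a constant slack $t=t(m)$.

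\emph{Case $\nu(\mathcal F^{(\le m)})<\ell+t$.} Here $n\ge\frac53m(\ell+t)$; this is where $\ell\le\frac35 s$ and $s\ge s_0(m)$ enter. So the exact Erd\H{o}s Matching Conjecture bound of Theorem~\ref{t: Frankl-Kupavskii EMC} gives $|\mathcal F^{(m)}|\le\binom nm-\binom{n-\ell-t+1}{m}$. The gain over $\mathcal P^{(m)}$ is therefore at most $\binom{n-\ell+1}{m}-\binom{n-\ell-t+1}{m}=O(s^{m-1})$. Lemma~\ref{l.frankl_shifting} bounds the lower layers by $(\ell+t-1)\sum_{i<m}\binom{n-1}{i-1}=O(s^{m-1})$. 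Both are beaten by the $\Theta(s^m)$ loss above, since $s-\ell\ge\frac25s$.

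\emph{Case $\nu(\mathcal F^{(\le m)})\ge\ell+t$.} Delete an $(\ell+t)$-matching of sets of size at most $m$. The remaining at least $(s-\ell-t)(m+1)+t$ points contain no $(s-\ell-t)$-matching of $\mathcal F$. So Lemma~\ref{l.frankl_shifting modified} gives $y_{\mathcal F}(m+1)\ge\bigl(1+\frac t{m+1}\bigr)\binom{n-(\ell+t)m-1}{m}$. For $t$ large in terms of $m$, comparing leading coefficients in $s$ shows this exceeds even the trivial bound $(s-1)\sum_{i\le m}\binom{n-1}{i-1}$ on all of $\mathcal F^{(\le m)}$.

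Without the exact matching bound in the first case, nothing keeps the gain on layer $m$ at order $s^{m-1}$. That is precisely where your argument stalls.
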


We will use the moreover part in the analysis of smaller sets in Section~\ref{s: completing proof}.
Lemma~\ref{l: matchings on layer m} is a consequence of the following technical lemma.

\begin{lemma} \label{l: parametrised matchings}
    Let $n = sm+s-\ell$. Assume that there exists $t \geq 0$ such that the following three conditions hold:
    {\small
    \begin{equation}
       \label{eq: condition t 1}
        {(m+1)(s-\ell)-1 \choose m} > (\ell+t-1)\sum_{i=1}^{m-1}{n-1 \choose i-1} + {n - \ell + 1 \choose m} - {n - \ell - t \choose m}.
    \end{equation}
    }
    \begin{equation}\label{eq: condition t 2}
        \left(\frac{t}{m+1} + 1\right){n - (\ell+t)m-1 \choose m} > (s-1)\sum_{i=1}^{m}{n-1 \choose i-1}.
    \end{equation}
    \begin{equation}\label{eq: condition t 3}
        e_{m}(n, \ell + t) = {n \choose m} - {n - \ell - t + 1 \choose m}.
    \end{equation}
    Then for any $\mathcal{F}$ with $\nu(\mathcal{F}) < s$ and
    \[
    |\mathcal{F}^{(\leq m+1)}| \geq |\mathcal{P}(m, s, \ell)^{(\leq m+1)}|
    \]
    we have $\nu(\mathcal{F}^{(m)}) < \ell$. Moreover, for any $k \leq \ell$ there is no $k$-matching $\{F_1, \ldots, F_k\}$ in $\mathcal{F}$ such that
    \[
    \sum_{i=1}^k |F_i| \leq km + k - \ell.
    \]
\end{lemma}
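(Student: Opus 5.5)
We argue by contradiction, assuming $\mathcal{F}$ is shifted. Suppose $\mathcal{F}$ contains a $k$-matching $F_1,\dots,F_k$ with $k\le\ell$ and $\sum|F_i|\le km+k-\ell$; the statement $\nu(\mathcal{F}^{(m)})\ge \ell$ is the special case $k=\ell$, $|F_i|=m$. Put $R=[n]\setminus\bigcup F_i$, so $|R|\ge n-km-k+\ell = (s-k)(m+1)$. The plan is to show that this configuration forces $\mathcal{F}$ to miss so many $(m+1)$-sets that the possible gain on the lower layers cannot compensate, contradicting $|\mathcal{F}^{(\le m+1)}|\ge|\mathcal{P}^{(\le m+1)}|$.

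\textbf{Step 1: the $(m+1)$-layer loss.} Since $\nu(\mathcal{F})<s$, the family $\mathcal{F}$ restricted to $R$ has no $(s-k)$-matching. Because $|R|\ge(s-k)(m+1)$, I expect an averaging (or a direct count) to show that $\mathcal{F}^{(m+1)}$ misses at least ${(m+1)(s-\ell)-1\choose m}$ sets. Concretely, choose a subset $R'\subset R$ of size exactly $(s-\ell)(m+1)$. Any perfect matching of $R'$ into $(m+1)$-sets must contain a missing set, so a Kleitman/Baranyai-type averaging yields at least ${|R'|-1\choose m}$ missing $(m+1)$-subsets of $R'$. This is the left-hand side of \eqref{eq: condition t 1}.

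\textbf{Step 2: bounding the gain on lower layers.} Compared with $\mathcal{P}$, the gain on layer $m$ is $|\mathcal{F}^{(m)}|-|\mathcal{A}^{(m)}_1(n,\ell)|$, and the gain on layers $i\le m-1$ is at most the number of $i$-sets not in $\mathcal{P}$. Introduce $t\ge 0$ and split into two cases.

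\emph{Case (a): $\nu(\mathcal{F}^{(m)})<\ell+t$.} Using \eqref{eq: condition t 3}, the gain on layer $m$ is at most ${n-\ell+1\choose m}-{n-\ell-t+1\choose m}$, which together with the adjustment between ${n-\ell-t+1\choose m}$ and ${n-\ell-t\choose m}$ gives the middle term in \eqref{eq: condition t 1}. For the lower layers, note that a shifted family cannot be much larger than a union of $\ell+t-1$ stars on each lower layer. I would use $\nu(\mathcal{F}^{(i)})<\ell+t$, derived from shiftedness plus the matching on layer $m$, and then apply Lemma~\ref{l.frankl_shifting} to get gain $\le(\ell+t-1)\binom{n-1}{i-1}$. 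Summing over $i$ gives the remaining term, so \eqref{eq: condition t 1} yields the contradiction.

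\emph{Case (b): $\mathcal{F}^{(m)}$ has an $(\ell+t)$-matching.} The complement of this matching has size $n-(\ell+t)m$, and restricted to it $\mathcal{F}$ has no $(s-\ell-t)$-matching. Apply Lemma~\ref{l.frankl_shifting modified} on layer $m+1$ inside this complement with $k=m+1$, writing $n-(\ell+t)m=(m+1)(s-\ell-t)+t$ (checking that $n-(\ell+t)m-(m+1)(s-\ell-t)=t$ indeed holds). This shows at least $(1+\frac{t}{m+1}){n-(\ell+t)m-1\choose m}$ missing $(m+1)$-sets. The total gain on layers $\le m$ is crudely at most $(s-1)\sum_{i\le m}{n-1\choose i-1}$, again by Lemma~\ref{l.frankl_shifting}, since $\nu<s$ and $\mathcal{P}$ already contains the relevant $\ell-1$ stars. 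Then \eqref{eq: condition t 2} gives the contradiction.

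\textbf{Main obstacle.} The hardest part is the general $k$-matching with sets of unequal sizes in Step 1, and making the lower-layer gain bounds in Case (a) rigorous. My first attempt would be to reduce to the case $k=\ell$ with all sizes equal to $m$: use shiftedness to enlarge sets of size less than $m$ while keeping the matching inside $\mathcal{F}$ via up-closure, which is harmless because $\mathcal{F}$ can be assumed up-closed in the extremal case. I would also take care that the ``$-1$'' in ${(m+1)(s-\ell)-1\choose m}$ matches the averaging count exactly.
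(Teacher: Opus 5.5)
Your skeleton matches the paper's proof. You lower-bound $y_{\mathcal F}(m+1)$ using the short matching, then split on whether an $(\ell+t)$-matching exists below layer $m+1$. In one case you use \eqref{eq: condition t 3} with Lemma~\ref{l.frankl_shifting}; in the other, Lemma~\ref{l.frankl_shifting modified} on the complement of the $(\ell+t)$-matching. Conditions \eqref{eq: condition t 1} and \eqref{eq: condition t 2} close the two cases, and your Case (b) is essentially the paper's.

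\textbf{Step 1 fails as written whenever $k<\ell$.} If $|R'|=(s-\ell)(m+1)$, a perfect matching of $R'$ together with $F_1,\dots,F_k$ has only $s-\ell+k<s$ members. So $\nu(\mathcal F)<s$ does not force any of its blocks to be missing from $\mathcal F$. Take $|R'|=(s-k)(m+1)$ instead, which fits because $|R|\ge(s-k)(m+1)$. Your averaging then gives at least $\binom{(s-k)(m+1)-1}{m}\ge\binom{(s-\ell)(m+1)-1}{m}$ missing $(m+1)$-sets, for arbitrary sizes $|F_i|$. The paper does the equivalent thing: it applies Lemma~\ref{l.frankl_shifting modified} to $\mathcal F\cap 2^{R}$ with $s-k$ in place of $s$.

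Your fallback reduction to $k=\ell$ with all $|F_i|=m$ does not work. Up-closure only enlarges the $F_i$, so it cannot raise the number of sets from $k$ to $\ell$, nor bring members of size larger than $m$ down to $m$. It is unnecessary once $R'$ is chosen correctly.

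\textbf{Case (a) needs a different justification.} Shiftedness cannot give $\nu(\mathcal F^{(i)})<\ell+t$ for $i<m$. Shifting acts within a single layer and says nothing about how $\mathcal F^{(i)}$ relates to $\mathcal F^{(m)}$. There are two ways to fix this.

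\emph{Pass to the up-closure of $\mathcal F$.} This is harmless here: it keeps $\nu<s$, does not decrease $|\mathcal F^{(\le m+1)}|$, and still contains the assumed matching. An $(\ell+t)$-matching in $\mathcal F^{(i)}$ then extends to one in $\mathcal F^{(m)}$. There is room: if $\ell+t<s$ then $(\ell+t)m<sm\le n$, and if $\ell+t\ge s$ the bound $\nu(\mathcal F^{(i)})<\ell+t$ is automatic.

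\emph{Split on $\nu(\mathcal F^{(\le m)})$, as the paper does.} This is simpler. Case (a) then gives $\nu(\mathcal F^{(i)})<\ell+t$ for all $i\le m$ directly, and Case (b) runs unchanged with $|Y|\ge n-(\ell+t)m$.

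\textbf{Drop the assumption that $\mathcal F$ is shifted.} It is not a free reduction for this lemma as stated, since shifting need not preserve the matching $F_1,\dots,F_k$ whose existence you assume. Nothing in the argument uses it.
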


Let us first derive Lemma~\ref{l: matchings on layer m} from Lemma~\ref{l: parametrised matchings}.

\begin{proof}[Proof of Lemma~\ref{l: matchings on layer m} using Lemma~\ref{l: parametrised matchings}]
We need to check that for every fixed $m$ and all sufficiently large $s$ there exists a value of $t$ satisfying conditions \eqref{eq: condition t 1}, \eqref{eq: condition t 2}, and \eqref{eq: condition t 3}. We fix $m$ and define $t$ as the smallest integer satisfying the condition
\begin{equation} \label{eq: t definition}
\frac{1}{m!}\left(\frac{t}{m+1} + 1\right)\left(\frac{2m}{5}\right)^m >
\frac{(m+1)^{m-1}}{(m-1)!}.
\end{equation}
We now verify that the three conditions are satisfied for all sufficiently large $s$.

First, let us check \eqref{eq: condition t 3}. By Theorem~\ref{t: Frankl-Kupavskii EMC}, condition \eqref{eq: condition t 3} holds for $s \geq s_0$ and
$n \geq \frac{5}{3}m(\ell+t).$
The latter condition is satisfied provided $s \geq s_0(m)$ and $\ell \leq \frac{3}{5}s$, since
\[
\frac{5}{3}m(\ell+t) \leq ms + \frac{5}{3}mt = n - (s-\ell) + \frac{5}{3}mt < n.
\]
The last inequality holds for sufficiently large $s$, since $s-\ell \geq \frac{2}{5}s$ and $t$ is constant once $m$ is fixed.

Next, we check \eqref{eq: condition t 1}. We have
\begin{equation}\label{eq665}
    {(m+1)(s-\ell)-1 \choose m} > {\frac{2}{5}(m+1)s-1 \choose m},
\end{equation}
and
\begin{align*}
    &(\ell+t-1)\sum_{i=1}^{m-1}{n-1 \choose i-1} + \left({n - \ell + 1 \choose m} - {n - \ell - t \choose m}\right) \\
    &\leq \left(\frac{3}{5}s+t-1\right)\sum_{i=1}^{m-1}{n-1 \choose i-1} + (t+1){n - \ell \choose m - 1} \\
    &\leq \left(\frac{3}{5}s+t-1\right)\sum_{i=1}^{m-1}{(m+1)s \choose i-1} + (t+1){(m+1)s \choose m - 1}.
\end{align*}
For fixed $m$ and $t$, the right-hand side of \eqref{eq665} is a polynomial in $s$ of degree $m$. At the same time, the last displayed bound is a polynomial in $s$ of degree $m-1$. Thus, \eqref{eq: condition t 1} holds for all sufficiently large $s$.

Finally, we check \eqref{eq: condition t 2}. The left-hand side of \eqref{eq: condition t 2} is at least
\begin{equation} \label{eq: lhs condition t 2}
    \left(\frac{t}{m+1} + 1\right){\frac{2}{5}sm - tm-1 \choose m},
\end{equation}
and the right-hand side is at most
\begin{equation} \label{eq: rhs condition t 2}
    (s-1)\sum_{i=1}^{m}{(m+1)s \choose i-1}.
\end{equation}
For fixed $m$ and $t$, both \eqref{eq: lhs condition t 2} and \eqref{eq: rhs condition t 2} are polynomials in $s$ of degree $m$. We compare their leading coefficients. The leading coefficient of \eqref{eq: lhs condition t 2} is
\[
\frac{1}{m!}\left(\frac{t}{m+1} + 1\right)\left(\frac{2m}{5}\right)^m,
\]
and the leading coefficient of \eqref{eq: rhs condition t 2} is
\[
\frac{(m+1)^{m-1}}{(m-1)!}.
\]
Thus, by \eqref{eq: t definition}, the leading coefficient in \eqref{eq: lhs condition t 2} is larger than that in \eqref{eq: rhs condition t 2}, and therefore condition \eqref{eq: condition t 2} holds for all sufficiently large $s$.
\end{proof}

We emphasize that the constant $\frac{3}{5}$ in the condition $\ell \leq \frac{3}{5}s$ is needed only for the validity of \eqref{eq: condition t 3} and is inherited from Theorem~\ref{t: Frankl-Kupavskii EMC}. Therefore, any improvement of the constant $\frac{5}{3}$ in Theorem~\ref{t: Frankl-Kupavskii EMC} would immediately imply an improvement of the constant $\frac{3}{5}$ in Lemma~\ref{l: matchings on layer m}. In particular, if the Erd\H{o}s Matching Conjecture is proved for
\[
n \geq (1 + \varepsilon)sk, \qquad s \geq s_0(\varepsilon, k),
\]
then it would imply Lemma~\ref{l: matchings on layer m} for $\ell \leq (1 - \varepsilon)s$, provided $s \geq s_0(\varepsilon, m)$.

Next, we prove Lemma~\ref{l: parametrised matchings}.

\begin{proof}[Proof of Lemma~\ref{l: parametrised matchings}]
Obviously, it is enough to prove the moreover part, since it implies $\nu(\mathcal{F}^{(m)}) < \ell$. We argue indirectly. Let $\{F_1, \ldots, F_k\}$ be a matching such that $F_i \in \mathcal{F}$ for all $i$, $k \leq \ell$, and
\[
\sum_{i=1}^k |F_i| \leq km + k - \ell.
\]
Let
\[
X = [n] \setminus \bigcup_{i=1}^k F_i
\qquad\text{and}\qquad
\mathcal{F}' = \mathcal{F} \cap 2^{X}.
\]
Note that
\[
|X| \geq n - (km+k-\ell) = (s-k)(m + 1),
\]
and $\mathcal{F}'$ does not contain an $(s-k)$-matching. Thus, we may apply Lemma~\ref{l.frankl_shifting modified} and obtain
{\small
\[
{X\choose m+1}\setminus \mathcal{F}' \geq \left(1 + \frac{|X| - (m+1)(s-k)}{m+1}\right){|X| - 1 \choose m} \geq {(m+1)(s-k) - 1 \choose m}.
\]
}
Therefore,
{\small
\begin{equation} \label{eq: y(m+1) bound 1}
    y_{\mathcal{F}}(m+1)
    \geq {X\choose m+1}\setminus \mathcal{F}'
    \geq {(m+1)(s-k) - 1 \choose m}
    \geq {(m+1)(s - \ell) - 1 \choose m}.
\end{equation}
}

Next, we consider two cases, according to whether $\nu(\mathcal{F}^{(\leq m)}) < \ell + t$ or $\nu(\mathcal{F}^{(\leq m)}) \geq \ell + t$.

If $\nu(\mathcal{F}^{(\leq m)}) < \ell + t$, then by \eqref{eq: condition t 3},
\[
|\mathcal{F}^{(m)}| \leq e_{m}(n, \ell + t) = {n \choose m} - {n - \ell - t + 1 \choose m}.
\]
For $i < m$ we use Lemma~\ref{l.frankl_shifting} to get
\[
|\mathcal{F}^{(i)}| \leq e_{i}(n, \ell + t) \leq (\ell+t-1){n-1 \choose i - 1}.
\]
These bounds, together with \eqref{eq: y(m+1) bound 1} and \eqref{eq: condition t 1}, imply that
\[
|\mathcal{F}^{(\leq m+1)}| < |\mathcal{P}(m, s, \ell)^{(\leq m+1)}|.
\]
Indeed,
{\small
\begin{align*}
    &|\mathcal{P}(m, s, \ell)^{(\leq m+1)}| - |\mathcal{F}^{(\leq m+1)}| \geq y_{\mathcal{F}}(m+1) - \left(|\mathcal{F}^{(m)}| - |\mathcal{P}(m, s, \ell)^{(m)}| + \sum_{i=1}^{m-1}|\mathcal{F}^{(i)}|\right) \\
    &\geq {(m+1)(s-\ell)-1 \choose m} - \left({n - \ell + 1 \choose m} - {n - \ell - t \choose m} + (\ell+t-1)\sum_{i=1}^{m-1}{n-1 \choose i-1} \right).
\end{align*}
}
This quantity is positive by \eqref{eq: condition t 1}.

If $\nu(\mathcal{F}^{(\leq m)}) \geq \ell + t$, we use this to improve \eqref{eq: y(m+1) bound 1}. Let $G_1, \ldots, G_{\ell+t}$ be an $(\ell+t)$-matching in $\mathcal{F}^{(\leq m)}$. Define
\[
Y = [n] \setminus \bigcup_{i=1}^{\ell+t}G_i
\qquad\text{and}\qquad
\mathcal{G}' = \mathcal{F} \cap 2^{Y}.
\]
We have
\[
|Y| \geq n - (\ell+t)m = (s-\ell-t)(m + 1)+t,
\]
and $\mathcal{G}'$ does not contain an $(s-\ell-t)$-matching. Therefore, we again apply Lemma~\ref{l.frankl_shifting modified} and obtain
\begin{align}
    y_{\mathcal{F}}(m+1) \notag
    &\geq {Y\choose m+1}\setminus \mathcal{G}' \\
    \label{eq: y(m+1) bound 2}
    &\geq \left(1 + \frac{t}{m+1}\right){|Y| - 1 \choose m} \\
    \notag
    &\geq \left(1 + \frac{t}{m+1}\right){n - (\ell+t)m - 1 \choose m}.
\end{align}
To bound $|\mathcal{F}^{(i)}|$ for $i \leq m$, we use Lemma~\ref{l.frankl_shifting} together with the trivial bound $\nu(\mathcal{F}^{(i)}) < s$:
\begin{align} \label{eq: i-th layer bound}
    |\mathcal{F}^{(i)}| \leq e_{i}(n, s) \leq (s-1){n-1 \choose i - 1}.
\end{align}
Finally, combining \eqref{eq: y(m+1) bound 2} and \eqref{eq: i-th layer bound} with \eqref{eq: condition t 2}, we obtain
\begin{align*}
    &|\mathcal{P}(m, s, \ell)^{(\leq m+1)}| - |\mathcal{F}^{(\leq m+1)}| \geq y_{\mathcal{F}}(m+1) - \sum_{i=1}^{m}|\mathcal{F}^{(i)}| \\
    &\geq \left(1 + \frac{t}{m+1}\right){n - (\ell+t)m - 1 \choose m} - (s-1)\sum_{i=1}^{m}{n-1 \choose i - 1} > 0.
\end{align*}
\end{proof}

\section{Proof of Theorem~\ref{t: strong main}} \label{s: completing proof}

In this section, we complete the proof of Theorem~\ref{t: strong main}. Recall that it is enough to prove the theorem for shifted families. Assume that $n = sm + s-\ell$, $\ell \leq \left(\frac{1}{2} - \varepsilon\right)s$, $s \geq s_0(m, \varepsilon)$, and $\mathcal{F} \subset 2^{[n]}$ is a shifted family such that $\nu(\mathcal{F}) < s$ and
\[
|\mathcal{F}^{(\leq m + 1)}| \geq |\mathcal{P}(m, s, \ell)^{(\leq m+1)}|.
\]
We also assume that $\ell \geq 2$ and $m \geq 3$, since for $\ell = 1$ \cite{Kl} and for $m \leq 2$ \cite{KS} the value $e(sm+s-\ell, s)$ is known and the theorem is already established.

We will prove that
\[
\mathcal{F}^{(\leq m + 1)} \subset \mathcal{P}(m, s, \ell)^{(\leq m+1)}.
\]
The inclusion on the $(m+1)$-st layer is obvious since
$
\mathcal{P}(m, s, \ell)^{(m+1)} = {[n] \choose m + 1}.
$
Thus, we may and will exclude the $(m+1)$-st layer from the considerations below.
We begin by proving the inclusion on the $m$-th layer.

\begin{lemma} \label{l: mth layer inclusion} We have
    \[
    \mathcal{F}^{(m)} \subset \mathcal{P}(m, s, \ell)^{(m)}.
    \]
\end{lemma}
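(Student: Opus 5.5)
We argue by contradiction, using Lemma~\ref{l: matchings on layer m} (applicable since $\ell\le(\frac12-\varepsilon)s\le\frac35 s$) together with the Hilton--Milner type stability result, Theorem~\ref{t: Hilton-Milner for EMC}. Note that $\mathcal{P}(m,s,\ell)^{(m)}=\{F\in{[n]\choose m}: F\cap[\ell-1]\neq\emptyset\}=\mathcal{A}_1^{(m)}(n,\ell)$, a union of $\ell-1$ stars. By Lemma~\ref{l: matchings on layer m}, $\nu(\mathcal{F}^{(m)})<\ell$. Since $\mathcal{F}$ is shifted, $\mathcal{F}^{(m)}$ is shifted. Hence if $\mathcal{F}^{(m)}$ is isomorphic to a subfamily of $\mathcal{A}_1^{(m)}(n,\ell)$, it is actually contained in $\mathcal{A}_1^{(m)}(n,\ell)$. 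Indeed, a shifted subfamily of a union of $\ell-1$ stars must be covered by the stars centred in $[\ell-1]$: a set avoiding $[\ell-1]$ can be shifted so as to avoid any prescribed $\ell-1$ elements, since $n$ is large. It therefore suffices to rule out the case where $\mathcal{F}^{(m)}$ is not isomorphic to such a subfamily.

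In that case, apply Theorem~\ref{t: Hilton-Milner for EMC} with $k=m$ and $s$ replaced by $\ell$. This is legitimate because $n=sm+s-\ell\ge \frac{2}{1-2\varepsilon}\ell m$, which exceeds $2\ell m(1+o(1))$ once $\ell$ is large. For $\ell$ bounded, instead use the classical Hilton--Milner-type bound for fixed $\ell$ and huge $n$, or simply the crude fact that $n\gg \ell m$. We obtain
\[
|\mathcal{F}^{(m)}|\le |\mathcal{P}(m,s,\ell)^{(m)}| - {n-m-\ell+1\choose m-1}+1.
\]

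Next we bound the lower layers. For $i\le m-1$, the moreover part of Lemma~\ref{l: matchings on layer m} shows that $\mathcal{F}^{(\le m-1)}$ contains no $k$-matching of total size at most $km+k-\ell$. In particular $\mathcal{F}^{(i)}$ has no $\ell$-matching for $i\le m-1$, since $\ell i\le \ell m<\ell m+\ell-\ell$ fails only trivially. More precisely, $\ell$ sets of size at most $m-1$ have total size at most $\ell m-\ell\le \ell m$, which is within the allowed range. So Lemma~\ref{l.frankl_shifting} gives $|\mathcal{F}^{(i)}|\le(\ell-1){n-1\choose i-1}$. The family $\mathcal{P}^{(i)}$ is nonnegative, and the layer-$(m+1)$ count of $\mathcal{F}$ is at most ${n\choose m+1}$. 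Hence
\[
|\mathcal{F}^{(\le m+1)}|-|\mathcal{P}^{(\le m+1)}|\le 1-{n-m-\ell+1\choose m-1}+(\ell-1)\sum_{i=1}^{m-1}{n-1\choose i-1}.
\]

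The main obstacle is that Lemma~\ref{l: HM calculation} only supplies the factor $\frac{\ell}{2}$, not $\ell-1$. The fix is to subtract what $\mathcal{P}$ already contains in the lower layers, namely the sets $P$ with $|P|+|P\cap[\ell-1]|\ge m+1$. Alternatively, one can exploit the sharper moreover constraint: a $k$-matching of sets of size $i$ is forbidden when $k(m+1-i)\ge\ell$, so $\nu(\mathcal{F}^{(i)})<\lceil \ell/(m+1-i)\rceil$. This gives $|\mathcal{F}^{(i)}|\le \frac{\ell}{m+1-i}{n-1\choose i-1}\le\frac{\ell}{2}{n-1\choose i-1}$ for $i\le m-1$. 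Then Lemma~\ref{l: HM calculation}, whose hypotheses $\ell\ge2$ and $s\ge\frac32\ell+m$ hold for large $s$, shows that the right-hand side above is negative: the strict inequality absorbs the $+1$ after an integer check, or directly since the gap is large. This contradicts the assumption $|\mathcal{F}^{(\le m+1)}|\ge|\mathcal{P}^{(\le m+1)}|$ and proves $\mathcal{F}^{(m)}\subset\mathcal{P}(m,s,\ell)^{(m)}$.
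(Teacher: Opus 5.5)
Once the false start with the factor $\ell-1$ is discarded, your argument is the paper's. It has the same steps: contradiction; reduction via shiftedness to the case where $\mathcal{F}^{(m)}$ is not isomorphic to a subfamily of $\mathcal{A}_1^{(m)}(n,\ell)$ (your justification of this step is correct); Theorem~\ref{t: Hilton-Milner for EMC} with $(k,s)$ replaced by $(m,\ell)$; the bound $\nu(\mathcal{F}^{(i)})<\lceil \ell/(m+1-i)\rceil$ from the moreover part of Lemma~\ref{l: matchings on layer m}, fed into Lemma~\ref{l.frankl_shifting}; and finally Lemma~\ref{l: HM calculation}.

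\textbf{The gap is in absorbing the $+1$.} Write $S=\sum_{i=1}^{m-1}\binom{n-1}{i-1}$ and $B=\binom{n-m-\ell+1}{m-1}$. The estimates $\sum_{i=1}^{m-1}|\mathcal{F}^{(i)}|\le \frac{\ell}{2}S<B$ together with integrality give only $\sum_{i}|\mathcal{F}^{(i)}|\le B-1$. Hence they give only $|\mathcal{F}^{(\le m+1)}|\le|\mathcal{P}(m,s,\ell)^{(\le m+1)}|$, which does not contradict the hypothesis. Your appeal to ``the gap is large'' is true, but it requires redoing the computation inside Lemma~\ref{l: HM calculation}, not just citing it.

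\textbf{The fix is to keep the slack you discarded.} Lemma~\ref{l.frankl_shifting} gives
$|\mathcal{F}^{(i)}|\le(\lceil \ell/2\rceil-1)\binom{n-1}{i-1}\le\frac{\ell-1}{2}\binom{n-1}{i-1}$ for $i\le m-1$. Since $S\ge n\ge 2$ for $m\ge 3$, this yields $1+\sum_{i=1}^{m-1}|\mathcal{F}^{(i)}|\le \frac{\ell}{2}S-\frac{S}{2}+1\le\frac{\ell}{2}S<B$, which is the strict inequality you need.

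The paper instead cancels the $+1$ against $|\mathcal{P}(m,s,\ell)^{(m-1)}|\ge 1$. That step actually requires $\ell\ge 3$. For $\ell=2$, however, the moreover part already forces $\mathcal{F}^{(i)}=\emptyset$ for $1\le i\le m-1$, so either route closes the argument.
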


\begin{proof}
    We argue indirectly. By Lemma~\ref{l: matchings on layer m} we have $\nu(\mathcal{F}^{(m)}) < \ell$. Assume that
    $\mathcal{F}^{(m)} \not\subset \mathcal{P}(m, s, \ell)^{(m)}.$
    It is easy to check that for shifted $\mathcal{F}$ this implies that $\mathcal{F}^{(m)}$ is not isomorphic to a subfamily of $\mathcal{P}(m, s, \ell)^{(m)}$. Moreover, we have
    \[
    n \geq sm \geq \frac{1}{\frac{1}{2}-\varepsilon}\ell m > (2+\varepsilon)\ell m.
    \]
    Therefore, for $s \geq s_0(\varepsilon)$ we may apply Theorem~\ref{t: Hilton-Milner for EMC} and obtain
    \begin{align}
        |\mathcal{F}^{(m)}| \notag
        &\leq |\mathcal{A}_1^{(m)}(n,\ell)| - {n - m - \ell + 1 \choose m - 1} + 1 \\
        \label{eq: mth layer inclusion 1}
        &= |\mathcal{P}(m, s, \ell)^{(m)}| - {n - m - \ell + 1 \choose m - 1} + 1.
    \end{align}

    Applying the moreover part of Lemma~\ref{l: matchings on layer m} to $\mathcal{F}^{(i)}$ for $i < m$, we get
    \[
    i\nu(\mathcal{F}^{(i)}) > m\nu(\mathcal{F}^{(i)}) + \nu(\mathcal{F}^{(i)}) - \ell,
    \]
    or, equivalently,
    \[
    \nu(\mathcal{F}^{(i)}) < \frac{\ell}{m-i+1}.
    \]
    Thus, for $i \leq m-1$ we have $\nu(\mathcal{F}^{(i)}) < \frac{\ell}{2}$, and by Lemma~\ref{l.frankl_shifting},
    \begin{equation} \label{eq: mth layer inclusion 2}
        |\mathcal{F}^{(i)}| \leq \left(\left\lceil \frac{\ell}{2}\right\rceil - 1\right){n - 1 \choose i - 1} < \frac{\ell}{2}{n - 1 \choose i - 1}.
    \end{equation}
    Therefore,
    \begin{align*}
        |\mathcal{P}(m, s, \ell)^{(\leq m+1)}| - |\mathcal{F}^{(\leq m+1)}|
        &\ge |\mathcal{P}(m, s, \ell)^{(m)}| - |\mathcal{F}^{(m)}| +1 - \sum_{i=1}^{m-1}|\mathcal{F}^{(i)}| \\
        &\geq {n - m - \ell + 1 \choose m - 1}  - \frac{\ell}{2}\sum_{i=1}^{m-1} {n - 1 \choose i - 1} > 0.
    \end{align*}
    In the first inequality, we used the fact that $\mathcal{P}(m, s, \ell)^{(m-1)}\neq\emptyset$ for $m \geq 3$ and $\ell \geq 2$. In the second inequality, we used \eqref{eq: mth layer inclusion 1} and \eqref{eq: mth layer inclusion 2}. In the last inequality, we applied Lemma~\ref{l: HM calculation}.
\end{proof}

In the final step of the proof, we will need a simple observation about cross-dependent families. We say that families $\mathcal{F}_1, \ldots, \mathcal{F}_s$ are {\it cross-dependent} if there is no $s$-matching $\{F_1, \ldots, F_s\}$ such that $F_i \in \mathcal{F}_i$ for all $i\in [s]$.

\begin{lemma} \label{l: cross-dependent averaging}
Let $n \geq sk$ and let $\mathcal{F}_1, \ldots, \mathcal{F}_s\subset {[n]\choose k}$ be cross-dependent. Then
\begin{equation} \label{eq: cross-dependent averaging}
    \sum_{i=1}^{s}|\mathcal{F}_i| \leq (s-1){n \choose k}.
\end{equation}
\end{lemma}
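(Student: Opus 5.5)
The plan is a standard random-partition averaging argument. Since $n \geq sk$, we can consider a uniformly random ordered $s$-tuple $(A_1,\ldots,A_s)$ of pairwise disjoint $k$-element subsets of $[n]$. One way to generate it is to take a uniformly random permutation $\sigma$ of $[n]$ and let $A_i=\sigma(\{(i-1)k+1,\ldots,ik\})$. The single fact we need about this distribution is that each $A_i$ is marginally a uniformly random element of ${[n]\choose k}$, which holds by symmetry.

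The argument then runs as follows. Define the random variable $Z=\sum_{i=1}^s \mathds{1}[A_i\in\mathcal F_i]$. By cross-dependence, $Z\le s-1$ always: if $Z=s$, then $\{A_1,\ldots,A_s\}$ would be an $s$-matching with $A_i\in\mathcal F_i$ for every $i$. Taking expectations and using the uniform marginals gives
\[
\sum_{i=1}^s \frac{|\mathcal F_i|}{{n\choose k}}=\mathbb E Z\le s-1,
\]
and multiplying through by ${n\choose k}$ yields \eqref{eq: cross-dependent averaging}.

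No step here is a real obstacle. The only point to verify is the uniform marginal of each $A_i$, and this follows directly from the permutation description. The hypothesis $n\ge sk$ is needed only to make the random disjoint $s$-tuple exist. The ordered-tuple formulation matters because the families $\mathcal F_i$ are distinct: with an unordered matching we could not say which set is tested against which family.
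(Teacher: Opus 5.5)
Your proposal is correct and is essentially the paper's own proof: take a random $s$-tuple of pairwise disjoint $k$-sets, bound $\sum_i \mathds{1}\{F_i\in\mathcal F_i\}$ by $s-1$ using cross-dependence, and compute its expectation. You go slightly further than the paper by making the ordering explicit via a random permutation and by justifying why each $A_i$ is uniformly distributed, both of which the paper leaves implicit.
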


Note that the bound in \eqref{eq: cross-dependent averaging} is best possible, since it is attained by taking $\mathcal{F}_i = {[n] \choose k}$ for $i\in[s-1]$ and $\mathcal{F}_s = \emptyset$.

\begin{proof}
    Let $\{F_1, \ldots, F_s\}$ be a random collection of pairwise disjoint sets from ${[n] \choose k}$. The condition $n \geq sk$ ensures that such collections exist. Let
    \[
    \xi = \sum_{i=1}^s \mathds{1}\{F_i \in \mathcal{F}_i\}.
    \]
    On the one hand, we have $\xi \leq s - 1$, since $\mathcal{F}_1, \ldots, \mathcal{F}_s$ are cross-dependent. On the other hand,
    \[
    \mathbb{E}\xi = \sum_{i=1}^s \frac{|\mathcal{F}_i|}{{n \choose k}}.
    \]
    Combining these bounds, we obtain \eqref{eq: cross-dependent averaging}.
\end{proof}

The following lemma completes the proof of Theorem~\ref{t: strong main}.

\begin{lemma} \label{l: lower layers inclusion}
    For all $i < m$ we have
    \[
    \mathcal{F}^{(i)} \subset \mathcal{P}(m, s, \ell)^{(i)}.
    \]
\end{lemma}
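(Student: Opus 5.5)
The plan is to argue by contradiction using shiftedness, the cross-dependence averaging of Lemma~\ref{l: cross-dependent averaging}, and the fact that the only possible gains over $\mathcal{P}(m,s,\ell)$ now lie in layers below $m$. By Lemma~\ref{l: mth layer inclusion} and the trivial inclusion on layer $m+1$, $\mathcal{F}$ can exceed $\mathcal{P}(m,s,\ell)$ only in layers $i<m$. Moreover, the moreover part of Lemma~\ref{l: matchings on layer m} gives $\nu(\mathcal{F}^{(i)})<\ell/(m-i+1)$, so these gains are at most $\frac{\ell}{2}\sum_{i\le m-1}\binom{n-1}{i-1}$. Suppose some $G\in\mathcal{F}^{(i)}$, $i<m$, is not in $\mathcal{P}(m,s,\ell)$, i.e.\ $j:=|G\cap[\ell-1]|\le m-i$. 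I will show that the presence of $G$ forces $\mathcal{F}$ to miss many sets of $\mathcal{P}(m,s,\ell)^{(m)}$ or of $\binom{[n]}{m+1}$, enough to contradict $|\mathcal{F}^{(\le m+1)}|\ge|\mathcal{P}(m,s,\ell)^{(\le m+1)}|$.

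The construction is as follows. Put $C=[\ell-1]\setminus G$, so $|C|=\ell-1-j$. Consider $s-1$ pairwise disjoint sets, all disjoint from $G$:
\begin{itemize}
\item for each $c\in C$, an $m$-set $A_c\ni c$ (such sets lie in $\mathcal{P}(m,s,\ell)^{(m)}$);
\item $s-\ell+j$ sets $B_r$ of size $m+1$.
\end{itemize}
Their total size together with $G$ is
\[
i+(\ell-1-j)m+(s-\ell+j)(m+1)=n-m+i+j\le n,
\]
because $i+j\le m$, so such configurations exist. Since $\nu(\mathcal{F})<s$ and $G\in\mathcal{F}$, in every such configuration some $A_c$ or $B_r$ is missing from $\mathcal{F}$. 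I will choose the configuration uniformly at random, exactly as in the proof of Lemma~\ref{l: cross-dependent averaging}. Let $p_m$ be the fraction of missing $m$-sets through a fixed center among those avoiding $G$; by shiftedness, the worst center is $c=\ell-1$. Let $p_{m+1}$ be the fraction of missing $(m+1)$-sets in $[n]\setminus G$. The averaging then gives $(\ell-1-j)p_m+(s-\ell+j)p_{m+1}\ge 1$. Hence either $p_{m+1}\gtrsim 1/(2s)$, which yields $y_{\mathcal{F}}(m+1)\gtrsim\binom{n}{m+1}/(2s)$ and a loss of order $\binom{n}{m}$, far larger than any gain; or $p_m\gtrsim1/(2\ell)$, which yields a loss of order $\binom{n-1}{m-1}/\ell$ in $\mathcal{P}(m,s,\ell)^{(m)}$.

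The main obstacle is the second alternative when $\ell$ is linear in $s$. A loss of $\binom{n-1}{m-1}/\ell$ compared with gains of order $\ell\binom{n-1}{m-2}\approx \frac{\ell m}{n}\binom{n-1}{m-1}$ only wins when $\ell^2\lesssim s$. To fix this I would first try to strengthen the loss using shiftedness more fully. The complements $\mathcal{P}^{(m)}\setminus\mathcal{F}$ restricted to the stars form an up-shift-closed structure. Also, a non-$\mathcal{P}$ set $G$ generates, by shifting, many non-$\mathcal{P}$ sets $G'$, each of which forbids its own configurations. The idea is to run the averaging over pairs ($G'$, configuration), i.e.\ to double count missing sets against all non-$\mathcal{P}$ sets present, and so obtain a bound of the form
\[
\#\text{missing sets}\ \ge\ c\cdot\frac{s}{\ell}\,\bigl|\mathcal{F}^{(<m)}\setminus\mathcal{P}(m,s,\ell)\bigr|.
\]
Alternatively, I would use Lemma~\ref{l.frankl_shifting modified} on the ground set $[n]\setminus(G\cup\bigcup A_c)$ for a fixed good choice of the $A_c\in\mathcal{F}$; note that if all the $A_c$ can be taken in $\mathcal{F}$, this directly forces $\binom{\cdot}{m}$-order many missing $(m+1)$-sets. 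Whichever route works, the final comparison would be completed with Lemma~\ref{l: low layers are small} and a calculation in the spirit of Lemma~\ref{l: HM calculation}, using $\ell\le(\frac12-\varepsilon)s$ and $s\ge s_0(m,\varepsilon)$.
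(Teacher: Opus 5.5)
As written, this is not yet a proof. You abandon your main averaging argument because of an estimate that is off by a factor of about $\ell$, and the two repairs are only sketched as things you would try.

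The lossy step is the second alternative, where you pass to the worst single center and infer a loss of order $\binom{n-1}{m-1}/\ell$. In your random configuration each $A_c$ meets $[\ell-1]\cup G$ exactly in $\{c\}$: it avoids $G$, and the other centers lie in the other $A_{c'}$. By symmetry under permutations of $U=[\ell,n]\setminus G$, the set $A_c\setminus\{c\}$ is uniform on $\binom{U}{m-1}$. Missing sets through different centers are therefore \emph{distinct} members of $\mathcal{P}(m,s,\ell)^{(m)}$, so the correct dichotomy is as follows.
Either $(s-\ell+j)\,p_{m+1}\ge\frac12$, which is your first case and is fine.
Or $\sum_{c\in C}\Pr[A_c\notin\mathcal{F}]\ge\frac12$. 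Then at least $\frac12\binom{|U|}{m-1}\ge\frac12\binom{n-m-\ell+1}{m-1}$ sets of $\mathcal{P}(m,s,\ell)^{(m)}$ are missing, and by Lemma~\ref{l: mth layer inclusion} this is a genuine loss.
That loss is a constant fraction of a whole star slice, not a $1/\ell$ fraction. The computation in Lemma~\ref{l: HM calculation}, with $\ell$ in place of $\frac{\ell}{2}$ on the right-hand side, goes through once $s\ge 2\ell+m$, which holds for $\ell\le(\frac12-\varepsilon)s$ and $s\ge m/(2\varepsilon)$. So your ``main obstacle'' is an artifact of bounding by one center, and the speculative double counting over all non-$\mathcal{P}$ sets is not needed.

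The paper's route removes the $(m+1)$-sets altogether, because they are already paid for in the moreover part of Lemma~\ref{l: matchings on layer m}. In fact your configuration with the $B_r$ deleted is already forbidden. $G$ together with the $\ell-1-j$ sets $A_c$ is a $k$-matching with $k=\ell-j$, and its total size $i+(\ell-1-j)m$ is at most $km+k-\ell$ exactly because $i+j\le m$. If $C=\emptyset$, then $i\le m+1-\ell$ and $k=1$ already gives a contradiction. Otherwise the families $\{A\setminus\{c\}: A\in\mathcal{F}^{(m)},\ A\cap([\ell-1]\cup G)=\{c\}\}\subset\binom{U}{m-1}$, $c\in C$, are cross-dependent. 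Lemma~\ref{l: cross-dependent averaging} then gives a loss of a full $\binom{|U|}{m-1}\ge\binom{n-m-\ell+1}{m-1}$, which is exactly what Lemma~\ref{l: HM calculation} compares with the gains $\frac{\ell}{2}\sum_{i=1}^{m-1}\binom{n-1}{i-1}$.

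The paper does the same after shifting $G$ to $F'=[x]\cup[\ell,\ell+y-1]$. It uses the $i+\ell-m-1$ centers in $[x+1,x+i+\ell-m-1]$, with $m$-sets meeting $[\ell+m-1]$ only in their center. This is also what your second repair needs to become a proof. The case where all $A_c$ can be chosen in $\mathcal{F}$ is the one killed by Lemma~\ref{l.frankl_shifting modified}, inside the proof of the moreover part. The complementary case, which you did not spell out, is precisely cross-dependence.
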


\begin{proof}
    We argue indirectly. Assume that there exists $F \in \mathcal{F}^{(i)}$ such that
    \[
    F \notin \mathcal{P}(m, s, \ell).
    \]
    If $i \leq m + 1 - \ell$, we immediately get a contradiction to Lemma~\ref{l: matchings on layer m} with $k = 1$ and $F_1 = F$. Thus, we may assume that
    \[
    i + \ell - m - 1 > 0.
    \]
    This inequality guarantees that all sums and integer intervals appearing below are nonempty.

    Recall that
    \[
    \mathcal{P}(m, s, \ell)^{(i)} = \left\{F \in {[n]\choose i}: |F \cap [\ell - 1]| \geq m - i + 1\right\}.
    \]
    Thus, we must have $|F \cap [\ell - 1]| \leq m - i$. Let
    \[
    x = |F \cap [\ell - 1]|
    \qquad\text{and}\qquad
    y = |F \cap [\ell, n]| = i - x.
    \]
    Since $\mathcal{F}$ is shifted, we have
    \[
    F' := [x] \cup [\ell, \ell + y - 1] \in \mathcal{F}.
    \]
    Put
    \[
    I := [x + 1,\, x + i + \ell - m - 1].
    \]
    Note that $I\subset [x+1,\ell-1]$, since
    $
    x+i+\ell-m-1 \le \ell-1
    $
    is equivalent to $x+i-m\le 0$. In particular, $I\cap F'=\emptyset$.

    For each $j\in I$, define
    \begin{align*}
        \mathcal{G}_{j} &= \big\{G \in \mathcal{F}^{(m)}: G \cap [\ell + m - 1] = \{j\} \big\},\\
        \mathcal{G}'_{j} &= \{G\setminus\{j\}: G \in \mathcal{G}_j\}.
    \end{align*}
    Note that $|\mathcal{G}_{j}| = |\mathcal{G}'_{j}|$ and
    \[
    \mathcal{G}'_{j} \subset {[\ell + m, n] \choose m - 1}.
    \]
    Also, note that $\mathcal{G}_j\subset \mathcal{P}(m,s,\ell)$. Moreover,
    \[
    F' \cap (I\cup[\ell + m, n]) = \emptyset,
    \]
    since $F'\cap I=\emptyset$ and  $y \leq i < m$. Therefore, $F' \cap G = \emptyset$ for every $j \in I$ and every $G \in \mathcal{G}_{j}$.

    Assume that the families $\mathcal G_j$, $j\in I$, are not cross-dependent. Then there exist sets $G_j \in \mathcal{G}_j$, $j \in I$, such that together with $F'$ they form an $(i + \ell - m)$-matching. But
    \[
    |F'| + \sum_{j \in I }|G_j|
    = i + (i + \ell - m - 1)m
    = (i + \ell - m)m + (i + \ell - m) - \ell.
    \]
    This contradicts Lemma~\ref{l: matchings on layer m} with $k = i + \ell - m$.

    Thus, we may assume that the families $\mathcal{G}'_j$, $j\in I$, are cross-dependent. We apply Lemma~\ref{l: cross-dependent averaging}. Let us check that the requirement $n \geq sk$ of that lemma is satisfied. We have $i + \ell - m - 1 \leq \ell - 1$ families in
    $
    {[\ell + m , n] \choose m - 1},
    $
    and
    \[
    |[\ell + m, n]| = n - m-\ell + 1 \ge sm  - m - \ell + 1  \geq (\ell - 1)(m-1).
    \]
    Therefore, Lemma~\ref{l: cross-dependent averaging} yields
    \begin{equation} \label{eq: averaging application}
    \sum_{j\in I}|\mathcal{G}_j|
    = \sum_{j\in I}|\mathcal{G}'_j|
    \leq (i + \ell - m - 2){n - m - \ell +1 \choose m - 1}.
    \end{equation}

    There are
    \[
    (i + \ell - m - 1){n - m - \ell +1 \choose m - 1}
    \]
    sets $G \in {[n] \choose m}$ such that
    $
    G \cap [\ell + m - 1] = \{j\}
    $
    for some $j \in I$. All of them belong to $\mathcal{P}(m, s, \ell)$. At the same time, by \eqref{eq: averaging application}, at least
    $
    {n - m - \ell + 1 \choose m - 1}
    $
    of them do not belong to $\mathcal{F}$. Also, recall that by Lemma~\ref{l: mth layer inclusion} we have
    \[
    \mathcal{F}^{(m)} \subset \mathcal{P}(m, s, \ell)^{(m)}.
    \]
    Altogether, this implies
    \begin{align} \label{eq: small mth layer}
        |\mathcal{P}(m, s, \ell)^{(m)}| - |\mathcal{F}^{(m)}| \geq {n - m - \ell + 1 \choose m - 1}.
    \end{align}

    We now conclude as in the proof of Lemma~\ref{l: mth layer inclusion}. By Lemmas~\ref{l: matchings on layer m} and~\ref{l.frankl_shifting} we have \eqref{eq: mth layer inclusion 2}. Combining \eqref{eq: small mth layer} and \eqref{eq: mth layer inclusion 2}, we get
    \begin{align*}
        |\mathcal{P}(m, s, \ell)^{(\leq m+1)}| - |\mathcal{F}^{(\leq m+1)}|
        &\geq |\mathcal{P}(m, s, \ell)^{(m)}| - |\mathcal{F}^{(m)}| - \sum_{i=1}^{m-1}|\mathcal{F}^{(i)}| \\
        &\geq {n - m - \ell + 1 \choose m - 1} - \frac{\ell}{2}\sum_{i=1}^{m-1} {n - 1 \choose i - 1} > 0.
    \end{align*}
    In the last inequality, we used Lemma~\ref{l: HM calculation}.
\end{proof}

\end{document}